\newtheorem{theorem}{Theorem}[section]
\newtheorem{proposition}[theorem]{Proposition}
\newtheorem{corollary}[theorem]{Corollary}
\newtheorem{lemma}[theorem]{Lemma}
\newtheorem{definition}[theorem]{Definition}
\numberwithin{equation}{section}
\theoremstyle{definition}
\newtheorem{remark}[theorem]{Remark}
\newtheorem{example}[theorem]{Example}
\DeclareSymbolFont{bbold}{U}{bbold}{m}{n}
\DeclareSymbolFontAlphabet{\mathbbold}{bbold}
\def\one{\mathbbold{1}}
\newcommand{\zs}
\newcommand{\term}[1]{{\textit{\textbf{#1}}}}   
\newcommand{\goesutau}{\xrightarrow{u\tau}}
\newcommand{\goestau}{\xrightarrow{\tau}}	
\definecolor{dred}{RGB}{95,2,31}
\begin{document}

\title[Unbounded topologies]
{Metrizability of minimal and unbounded topologies}

\author{M. Kandi\'c}
\address{Faculty of Mathematics and Physics, University of Ljubljana,
  Jadranska 19, 1000 Ljubljana, Slovenia.}
\email{marko.kandic@fmf.uni-lj.si}

\author{M.A. Taylor}
\address{Department of Mathematical and Statistical Sciences,
         University of Alberta, Edmonton, AB, T6G\,2G1, Canada.}
\email{mataylor@ualberta.ca}

\keywords{$u\tau$-topology, minimal topologies, metrizability, submetrizability, local boundedness, countable order basis, weak units, quasi-interior points}
\subjclass[2010]{46A40, 46A16, 46B42}

\thanks{The first author acknowledges financial support from the Slovenian Research Agency (research core funding No. P1-0222). The second author acknowledges support from NSERC and the University of Alberta.}

\date{\today}

\begin{abstract}
In 1987, I.~Labuda proved a general representation theorem that, as a special case, shows that the topology of local convergence in measure is the minimal topology on Orlicz spaces and $L_{\infty}$. Minimal topologies connect with the  recent, and actively studied, subject of ``unbounded convergences". In fact, a Hausdorff locally solid topology $\tau$ on a vector lattice $X$ is minimal iff it is Lebesgue and the $\tau$ and unbounded $\tau$-topologies agree. In this paper, we study metrizability, submetrizability, and local boundedness of the unbounded topology, $u\tau$, associated to $\tau$ on $X$. Regarding metrizability, we prove that if $\tau$ is a locally solid metrizable topology then $u\tau$ is metrizable iff there is a countable set $A$ with $\overline{I(A)}^\tau=X$. We prove that a minimal topology is metrizable iff $X$ has the countable sup property and a countable order basis. In line with the idea that $uo$-convergence generalizes convergence almost everywhere, we prove relations between minimal topologies and $uo$-convergence that generalize classical relations between convergence almost everywhere and convergence in measure.
\end{abstract}

\maketitle

\section{Introduction and preliminaries}

Throughout this paper, $X$ is a vector lattice, assumed Archimedean, and $\tau$ is a locally solid topology on $X$. For a net $(x_{\alpha})$ in $X$, we write $x_{\alpha}\xrightarrow{o}x$ if $(x_{\alpha})$ \textbf{\textit{converges to $x$ in order}}; that is, there is a net $(y_{\beta})$, possibly over a different index set, such that $y_{\beta}\downarrow 0$ and for every $\beta$ there exists $\alpha_0$ such that $|x_{\alpha}-x|\leq y_{\beta}$ whenever $\alpha\geq \alpha_0$. We write $x_{\alpha}\xrightarrow{uo}x$ and say that $(x_{\alpha})$ \textbf{\textit{uo-converges}} to $x\in X$ if $|x_{\alpha}-x|\wedge u\xrightarrow{o}0$ for every $u \in X_+$. Here ``$uo$" stands for ``unbounded order". It is known that if $(\Omega,\Sigma,\mu)$ is a semi-finite measure space and $X$ is a regular sublattice of $L_0(\mu)$, then a sequence $(x_n)$ in $X$ satisfies $x_n\xrightarrow{uo}0$ in $X$ iff $x_n\xrightarrow{uo}0$ in $L_0(\mu)$ iff $x_n\xrightarrow{a.e.}0$, so that $uo$-convergence can be thought of as a generalization of convergence almost everywhere to vector lattices. We refer the reader to \cite{GTX} for further details on $uo$-convergence.

Let $(\Omega,\Sigma,\mu)$ be a $\sigma$-finite measure space. For each $E\in \Sigma$ with $\mu(E)<\infty$ define the Riesz pseudonorm $\rho_E:L_0(\mu)\to \mathbb{R}$ via $$\rho_E(x)=\int_E\frac{|x|}{1+|x|}d\mu.$$ The family of Riesz pseudonorms $\{\rho_E: E\in \Sigma\ \text{and}\ \mu(E)<\infty\}$ defines a Hausdorff locally solid topology $\tau_{\mu}$ on $L_0(\mu)$ known as the \term{topology of (local) convergence in measure on $L_0(\mu)$}. For $0\leq p\leq \infty$, the topology of convergence in measure on $L_p(\mu)$ is defined, simply, as the restriction $\tau_{\mu}|_{L_p(\mu)}$.

In \cite{DOT}, the concept of ``unbounded norm convergence" was introduced as a generalization of convergence in measure. Let $X$ be a Banach lattice. A net $(x_{\alpha})$ in $X$ \textbf{\textit{un-converges}} to $x \in X$ if $|x_{\alpha}-x|\wedge u\xrightarrow{\|\cdot\|}0$ for every $u \in X_+$. The authors show that if $(f_n)$ is a sequence in $L_p(\mu)$ where $1\leq p<\infty$ and $\mu$ is a finite measure, then $f_n\xrightarrow{un}0$ iff $(f_n)$ converges to zero in measure. However, in $L_{\infty}:=L_{\infty}[0,1]$, $un$-convergence agrees with norm convergence and, therefore, fails to agree with convergence in measure.

In \cite{me}, $un$-convergence was further abstracted. Given a locally solid topology $\tau$ on a vector lattice $X$, one can associate a topology, $u\tau$, in the following way. If $\{U_i\}_{i\in I}$ is a base at zero for $\tau$ consisting of solid sets, for each $i\in I$ and $u\in X_+$ define
$$U_{i,u}:=\{x\in X:\; |x|\wedge u\in U_i\}.$$
As was proven in \cite[Theorem 2.3]{me} and \cite{DEM utau}, the collection $\mathcal N_0=\{U_{i,u}:\; i\in I, u\in X_+\}$ is a base of neighbourhoods at zero for a new locally solid topology, denoted by $u\tau$, and referred to as the \term{unbounded $\tau$-topology}. Noting that the map $\tau\mapsto u\tau$ from the set of locally solid topologies on $X$ to itself is idempotent, a locally solid topology $\tau$ is called \term{unbounded} if there is a locally solid topology $\sigma$ with $\tau=u\sigma$ or, equivalently, if $\tau=u\tau.$

A Hausdorff locally solid topology on a vector lattice $X$ is said to be \term{minimal} if there is no coarser Hausdorff locally solid topology on $X$, and \term{least} if it is coarser than every other Hausdorff locally solid topology on $X$. Least topologies were introduced in \cite{AB80} and studied in \cite{AB03}; minimal topologies were studied in \cite{LAB}, \cite{Conradie05}, and \cite{Tay2}.  The following connection between minimal topologies, unbounded topologies, and $uo$-convergence was proven in \cite[Theorem 6.4]{me}. Recall that a locally solid topology $\tau$ is \textbf{\textit{Lebesgue}} if order null nets are $\tau$-null.
\begin{theorem}\label{a.e. implies measure}
Let $\tau$ be a Hausdorff locally solid topology on a vector lattice $X$. TFAE:
\begin{enumerate}
\item\label{a.e. implies measure-1} $uo$-null nets are $\tau$-null;
\item $\tau$ is Lebesgue and unbounded;
\item $\tau$ is minimal.
\end{enumerate}
\end{theorem}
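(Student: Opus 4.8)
The plan is to establish the cycle (iii)$\Rightarrow$(ii)$\Rightarrow$(i)$\Rightarrow$(iii). I begin with three elementary observations about the operation $\rho\mapsto u\rho$. If $\{U_i\}$ is a base of solid $\tau$-neighbourhoods of $0$, then $U_i\subseteq U_{i,u}$ for each $u\in X_+$ (solidity, since $|x|\wedge u\le|x|$), so $u\tau\le\tau$; moreover $u\tau$ is Hausdorff whenever $\tau$ is, since $x\in\bigcap_{i,u}U_{i,u}$ forces $|x|\wedge u=0$ for all $u\in X_+$, and then $u=|x|$ gives $x=0$; and if $\tau$ is Lebesgue, every $uo$-null net is $u\tau$-null, because $x_\alpha\xrightarrow{uo}0$ means $|x_\alpha|\wedge u\xrightarrow{o}0$, hence $|x_\alpha|\wedge u\goestau 0$, for every $u\in X_+$, which is exactly the statement $x_\alpha\goesutau 0$.

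For (iii)$\Rightarrow$(ii): minimal topologies are Lebesgue \cite{LAB,Conradie05}, and by the observations above $u\tau$ is a Hausdorff locally solid topology coarser than $\tau$, so minimality forces $u\tau=\tau$; hence $\tau$ is unbounded. For (ii)$\Rightarrow$(i): if $\tau$ is Lebesgue and unbounded and $x_\alpha\xrightarrow{uo}0$, then $x_\alpha\goesutau 0$ by the third observation, and $u\tau=\tau$, so $x_\alpha\goestau 0$.

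The implication (i)$\Rightarrow$(iii) is the substantive one. Property (i) already forces $\tau$ to be Lebesgue, since order-null nets are $uo$-null. To prove minimality, fix a Hausdorff locally solid topology $\sigma\le\tau$; one must show $\sigma=\tau$. Being coarser than $\tau$, $\sigma$ inherits (i) — $uo$-null nets are $\tau$-null, hence $\sigma$-null — and so is itself Lebesgue; it thus remains to prove $\tau\le\sigma$, i.e.\ that property (i) forces $\tau$ to be the least locally solid topology on $X$. At this point I would call on the structure theory of minimal topologies: by \cite{LAB,Conradie05,AB80,AB03}, a vector lattice carrying a Hausdorff Lebesgue topology carries a coarsest one, $\tau_L$, which is at the same time the least locally solid topology on $X$ and is unbounded, so $\tau_L\le\sigma\le\tau$. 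Since $\tau_L$ and $\tau$ are Hausdorff Lebesgue they induce the same topology on order intervals, and since $\tau_L=u\tau_L$ a $\tau_L$-null net $(x_\alpha)$ satisfies $|x_\alpha|\wedge u\goestau 0$ for every $u\in X_+$, that is $x_\alpha\goesutau 0$; hence $u\tau\le\tau_L$, while $\tau_L=u\tau_L\le u\tau$ by monotonicity of the passage to unbounded topologies, and therefore $u\tau=\tau_L$. The last step — which I expect to be the main obstacle — is to upgrade $u\tau=\tau_L$ to $\tau=\tau_L$, ruling out that $\tau$ is strictly finer than $u\tau$. The natural route is to show that, under (i), every $u\tau$-null net admits a $uo$-null subnet — a net analogue of extracting an almost-everywhere convergent subsequence from a sequence convergent in measure, read off from Labuda's representation of $u\tau=\tau_L$ as a topology of convergence in measure on the bands of a maximal disjoint family — after which (i) together with the subnet criterion for net convergence shows that $u\tau$-null nets are $\tau$-null, so $\tau=u\tau=\tau_L$ is minimal. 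Everything outside this subnet extraction is bookkeeping with the operation $\rho\mapsto u\rho$.
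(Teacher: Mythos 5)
The paper does not prove this theorem itself --- it quotes it from \cite[Theorem 6.4]{me} --- so your argument has to stand on its own. Most of it does: the three preliminary observations about $\rho\mapsto u\rho$ are correct, (iii)$\Rightarrow$(ii) is fine modulo the (legitimately citable, but nontrivial) fact that minimal topologies are Lebesgue, (ii)$\Rightarrow$(i) is correct, and your reduction of (i)$\Rightarrow$(iii) to the single assertion ``(i) forces $\tau\subseteq u\tau$'' is also correct: once $\tau$ is known to be Lebesgue, $u\tau$ coincides with the coarsest Hausdorff Lebesgue topology $\tau_L$, any Hausdorff locally solid $\sigma\subseteq\tau$ is automatically Lebesgue and hence finer than $\tau_L$, and everything collapses once $\tau=u\tau$. (One misstatement along the way: $\tau_L$ is \emph{not} in general the least locally solid topology on $X$ --- the paper itself points out that $L_\infty$ has a minimal but no least topology. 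This is harmless in your argument, since you only compare $\tau_L$ with the Lebesgue topology $\sigma$.)

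The genuine gap is the step you yourself flag as ``the main obstacle'': the claim that every $u\tau$-null net admits a $uo$-null subnet. This is not bookkeeping --- it \emph{is} the implication (i)$\Rightarrow$(iii); everything else in your write-up is routine, and you give no argument for it beyond an analogy with extracting a.e.-convergent subsequences from sequences convergent in measure. That analogy is precisely what Section 6 of this paper shows to be delicate: \Cref{measure to a.e.} and \Cref{further} extract $uo$-null \emph{subsequences} from $\tau$-null sequences only under the hypothesis that $X^u$ has the countable sup property, and the example following \Cref{further} (based on \cite[Example 9.6]{KLT}) exhibits a minimal topology with a null sequence having \emph{no} $uo$-null subsequence. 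Subnets are more flexible than subsequences, so this is not a literal counterexample to your claim, but it shows the heuristic you are leaning on fails in exactly the generality you need, and nothing in your sketch explains why passing to subnets rescues it. As it stands, (i)$\Rightarrow$(iii) is unproved; you need either a proof of the subnet-extraction lemma or (more plausibly) a direct argument that under (i) every $u\tau$-null net is $\tau$-null --- e.g.\ by building, from a putative solid $\tau$-neighbourhood $U$ containing no set of the form $V_u$, an auxiliary net that hypothesis (i) applies to.
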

Nets cannot be replaced with sequences in (i) if equivalence is to be maintained. Indeed,  \cite[Theorem 3.9]{GTX} states that order and $uo$-convergences agree for sequences in universally $\sigma$-complete vector lattices. Combining this observation with \cite[Theorem 7.49]{AB03}, we conclude that $uo$-convergent sequences are topologically convergent for any locally solid topology on a universally $\sigma$-complete space. However, \cite[Chapter 7 Exercise 21]{AB03} gives an example of a Hausdorff locally solid topology on a universally $\sigma$-complete vector lattice that fails to be Lebesgue, and thus fails to be minimal.

The equivalence of (i) and (iii) has roots in classical relations between convergence almost everywhere and convergence in measure. Let $(\Omega,\Sigma,\mu)$ be a $\sigma$-finite measure space. It is classically known that for $0\leq p<\infty$, the topology of convergence in measure is the least topology on $L_p(\mu)$, c.f.,  \cite[Theorem 7.55]{AB03} and \cite[Theorem 7.74]{AB03}.  \Cref{a.e. implies measure}(i) reduces to the well-known fact that almost everywhere convergent sequences converge in measure. It can also be used, in conjunction with \cite[Theorem 3.2]{GTX}, to give a one line proof that the restriction of the topology of convergence in measure on $L_0(\mu)$ to any regular sublattice is the minimal topology on said sublattice. We note that, in $L_{\infty}$, the $un$-topology is not minimal. The minimal topology of $L_{\infty}$ is the topology $u|\sigma|(L_{\infty},L_1)$; it agrees with the topology of convergence in measure in $L_{\infty}$. As is shown in \cite[Theorem 7.75]{AB03}, $L_{\infty}$ admits no least topology.
\\

We next recall some notation. Let $A$ be a subset of a vector lattice $X$. The order ideal and the band generated by $A$ are denoted by $I(A)$ and $B(A)$, respectively. If $A=\{a\}$, we define $I_a:=I(\{a\})$ and $B_a:=B(\{a\})$. A positive vector $e\in X$ is said to be a \term{strong unit} if $I_e=X$. If $B_e=X$, then $e$ is called a \term{weak unit}.
If $A$ is at most countable and $B(A)=X$ then, following \cite{Luxemburg71}, we say that $X$ has a \term{countable order basis} (and call  $A$ a countable order basis for $X$).  Obviously, if $e$ is a weak unit in $X$, then $\{e\}$ is a countable order basis for $X$.  A sublattice $Y$ of $X$ is called \term{order dense} if for each $0\neq x\in X_+$ there exists $y\in Y$ with  $0<y\leq x$.

Let $(X,\tau)$ be a topological vector space.  We say that $\tau$ is \term{metrizable} if there exists a metric on $X$ whose metric topology equals $\tau$. We say that $\tau$ is \term{submetrizable} if it is finer than a metrizable topology. A standard fact from topological vector spaces is that a linear topology is metrizable iff it is Hausdorff and first countable (see, e.g., \cite{Kelley:63}). A subset $A$ of $X$ is \term{bounded} if for each neighbourhood $U$ of zero for $\tau$ there exists $\lambda>0$ such that $\lambda A\subseteq U$. If $X$ contains a bounded neighbourhood of zero, then $X$ is said to be \term{locally bounded}. Local boundedness is the strongest of the metrizability related notions. Indeed, if $V$ is a bounded neighbourhood of zero, then a base at zero for $\tau$ is given by $\frac{1}{n} V$ for $n\in \mathbb{N}.$ Hence, every Hausdorff locally bounded topological vector space is first countable and, therefore, metrizable.

A linear topology $\tau$ on a vector lattice $X$ is said to be \term{locally solid} if $\tau$ has a base at zero consisting of solid sets. Note that a locally solid metrizable topology has a countable base at zero consisting of solid sets with trivial intersection. We say that $\tau$ is \term{Riesz submetrizable} if it is finer than a metrizable locally solid topology.

In a Hausdorff locally solid vector lattice, there is an intermediate notion between weak and strong units. Given a positive vector $e$ in a locally solid vector lattice $(X,\tau)$, if $I_e$ is $\tau$-dense in $X$, then $e$ is called a \term{quasi-interior point} of $(X,\tau)$. As in the case of normed lattices, it is easily checked that $e$ is a quasi-interior point iff $x-x\wedge ne\xrightarrow{\tau} 0$ for each $x\in X_+$.

Before we conclude this section we briefly recall the basics regarding the topological completion of a Hausdorff locally solid vector lattice. Let $(X,\tau)$ be a Hausdorff locally solid vector lattice and let $(\widehat X,\widehat \tau)$ be the topological completion of $(X,\tau)$. Then the $\widehat \tau$-closure of $X_+$ in $\widehat X$ is a cone in $\widehat X$ and $(\widehat X,\widehat \tau)$ equipped with this cone is a Hausdorff locally solid vector lattice containing $X$ as a $\widehat \tau$-dense vector sublattice. Moreover, $\widehat \tau$-closures of solid subsets of $X$ are solid in $\widehat X$, and if $\mathcal N_0$ is a base at zero for $(X,\tau)$ consisting of solid sets, then $\{\overline{V}:\; V\in \mathcal N_0\}$ is a base at zero for $(\widehat X,\widehat \tau)$ consisting of solid sets. Here, $\overline V$ denotes the closure of $V$ in $(\widehat X,\widehat \tau)$.  In particular, $(X,\tau)$ is metrizable iff $(\widehat{X},\widehat{\tau})$ is metrizable. For more details on topological vector spaces we refer the reader to \cite{Kelley:63}. All unexplained details in this paper regarding vector lattices and locally solid topologies can be found in \cite{AB03} and \cite{Aliprantis:06}.
\section{Submetrizability of unbounded topologies}
Submetrizability of the unbounded topology was first considered in \cite{KMT}. It is proved in \cite[Proposition 3.3]{KMT} that the unbounded norm topology on a Banach lattice $X$ is submetrizable iff $X$ has a weak unit. It is proved in \cite{DEM utau} that $u\tau$ is submetrizable if $(X,\tau)$ is a metrizable locally solid vector lattice with a weak unit. In \cite{DEM umtau}, the authors proved the converse statement for complete metrizable locally convex-solid vector lattices. In this section, we provide the complete answer on submetrizability of the unbounded topology.

The following example shows that the converse of \cite[Proposition 6]{DEM utau}, in general, does not hold.

\begin{example}\label{c00} 
Consider the vector lattice $c_{00}$ of all eventually null sequences, equipped with the supremum norm. Then $c_{00}$ is a normed lattice without a weak unit, yet the unbounded norm topology is metrizable; a metric $d$ that induces the unbounded norm topology on $c_{00}$ is given by
$$d(x,y)=\sup_n\biggr\{\frac{\min\{|x_n-y_n|,1\}}{n}\biggr\}.$$
\end{example}

It turns out that, when considering submetrizability of the unbounded topology $u\tau$ in spaces that are not complete nor metrizable, the correct replacement  for weak units is the existence of a countable order basis in $X$. Before showing this, we make a remark about countable order bases.
\begin{remark}\label{COB Trick}
It is convenient in the definition of a countable order basis to replace the at most countable set $A$ satisfying $B(A)=X$ with a positive increasing sequence $(u_n)$ satisfying $B(\{u_n\})=X$. This is easily done by enumerating $A=\{a_i\}_{i\in I}$ where $I=\mathbb{N}$ or $\{1,\dots,N\}$ and defining $u_n=|a_1|\vee\cdots\vee |a_n|$ if $n\in I$ and $u_n=u_N$ if $n\in \mathbb{N}\backslash I$. Throughout, when we say that $A=\{u_n\}$ is a countable order basis for $X$ it is tacitly assumed that $(u_n)$ is a positive increasing sequence.
\end{remark}
We also choose to work in more generality. As was shown in \cite[Proposition 9.3]{me}, if $A$ is an ideal of a locally solid vector lattice $(X,\tau)$ and $\{U_i\}_{i\in I}$ is a solid base at zero for $\tau$, then the collection of sets $\{U_{i,a} : i\in I, a\in A_+\}$ defines a locally solid topology $u_A\tau$ on $X$. $u_A\tau$ is known as the \term{unbounded topology on $X$ induced by the ideal $A$}. Note that the topology $u_A\tau$ is Hausdorff iff $\tau$ is Hausdorff and $A$ is order dense in $X$. Also, note that $u_X\tau=u\tau$.
\begin{proposition}\label{Rmetric} 
Let $(X,\tau)$ be a locally solid vector lattice and $A$ an ideal of $X$.
\begin{enumerate}
\item If $\tau$ is Riesz submetrizable and there is a set in $A$ that is a countable order basis for $X$ then $u_A\tau$ is Riesz submetrizable.
\item If $u_A\tau$ is  submetrizable then there is a set in $A$ that is a countable order basis for $X$.
\end{enumerate}
\end{proposition}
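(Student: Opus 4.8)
For (i), the plan is to start from a metrizable locally solid topology $\sigma \leq \tau$, say with a countable solid base at zero $\{V_k\}_{k\in\NN}$ with trivial intersection. Let $\{u_n\}$ be a countable order basis for $X$ lying in $A$; by Remark \ref{COB Trick} we may assume $(u_n)$ is positive and increasing. The natural candidate for the witnessing metrizable topology on $X$ coarser than $u_A\tau$ is the topology $\rho$ with countable solid base at zero $\{(V_k)_{k,u_n}\}_{k,n\in\NN}$. Two things must be checked. First, $\rho$ is a genuine locally solid topology: this follows because $\sigma$ is locally solid and $\{u_n\}\subseteq A$, so $\rho$ is exactly the unbounded topology $u_{A_0}\sigma$ where $A_0$ is the ideal generated by $\{u_n\}$ inside $A$ (using \cite[Proposition 9.3]{me}); in particular $\rho$ has a countable solid base at zero. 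Since each $(V_k)_{k,u_n}\supseteq V_{k,u_n}$ and the latter are a subbase for $u_A\tau$, after closing under finite intersections one gets $\rho\le u_A\tau$, i.e.\ $u_A\tau$ is finer than $\rho$. Second, $\rho$ must be Hausdorff, equivalently (by the remark after \cite[Proposition 9.3]{me}) $\sigma$ Hausdorff and $\{u_n\}$ order dense — but $\{u_n\}$ generates $X$ as a band, and $B(\{u_n\})=X$ already gives order density of the ideal it generates, so $\rho$ is Hausdorff. Being a Hausdorff first-countable locally solid topology coarser than $u_A\tau$, $\rho$ is metrizable locally solid, so $u_A\tau$ is Riesz submetrizable.

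For (ii), suppose $u_A\tau$ is submetrizable, witnessed by a metrizable (not necessarily solid) topology $\sigma\le u_A\tau$ with a countable base at zero $\{W_k\}$. We need to produce a countable order basis for $X$ contained in $A$. For each $k$, since $W_k$ is a $\sigma$-neighbourhood of zero and $\sigma\le u_A\tau$, there is a basic $u_A\tau$-neighbourhood of zero inside $W_k$; such a neighbourhood has the form $U_{i_1,a_1}\cap\cdots\cap U_{i_m,a_m}$ for some $i_j\in I$ and $a_j\in A_+$, and since $\bigvee_j a_j\in A_+$ we can absorb this into a single $U_{i,a_k}$ with $a_k\in A_+$ (using solidity/monotonicity of the $U_i$). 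Thus for every $k$ there is $a_k\in A_+$ and $i_k\in I$ with $U_{i_k,a_k}\subseteq W_k$. The claim is that $\{a_k\}_{k\in\NN}$ is a countable order basis for $X$, i.e.\ $B(\{a_k\})=X$. The main obstacle — and the crux of the argument — is this claim: one must show that if $0<x\in X$ is disjoint from every $a_k$, a contradiction ensues. The idea is that disjointness from all $a_k$ forces $|x|\wedge a_k = 0 \in U_{i_k}$, so $\lambda x \in U_{i_k,a_k}\subseteq W_k$ for \emph{every} scalar $\lambda$ and every $k$; hence the line $\RR x$ is $\sigma$-bounded and in fact $\RR x\subseteq W_k$ for all $k$, so $\RR x \subseteq \bigcap_k W_k = \{0\}$ by Hausdorffness of $\sigma$, contradicting $x\neq 0$. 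Therefore no nonzero vector is disjoint from $\{a_k\}$, which for an Archimedean vector lattice means the band generated by $\{a_k\}$ is all of $X$.

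I expect the bulk of the care to go into (ii), specifically the reduction from a finite intersection of basic sets $U_{i_j,a_j}$ to a single $U_{i,a}$ with $a\in A_+$: one uses that $\{U_i\}$ is a base (so some $U_i\subseteq \bigcap U_{i_j}$), that the $U_i$ are solid, and the elementary inclusion $U_{i,a}\subseteq U_{i,b}$ when $0\le b\le a$ together with $U_{i,a}\cap U_{i,b}\supseteq U_{i,a\vee b}$-type manipulations — all routine but worth stating cleanly. The disjointness-to-band step at the end is where Archimedeanness of $X$ is used: the band generated by a set equals the set of vectors disjoint from its disjoint complement, and the disjoint complement of $\{a_k\}$ has been shown to be $\{0\}$. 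Part (i) is comparatively soft, essentially bookkeeping with \cite[Proposition 9.3]{me} and the completion/first-countability facts recalled in Section 1; the only subtlety is making sure the coarser metrizable topology one writes down really is locally solid and Hausdorff, which is why phrasing it as an unbounded topology $u_{A_0}\sigma$ is convenient.
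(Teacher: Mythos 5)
Your proposal is correct and follows essentially the same route as the paper: in (i) you build the countable solid base $\{V_{k,u_n}\}$ from a coarser metrizable locally solid topology and a countable order basis in $A$, checking Hausdorffness via order density of $I(\{u_n\})$ (the paper checks it directly with \cite[Lemma 2.2]{LC}); in (ii) you extract $a_k\in A_+$ from basic $u_A\tau$-neighbourhoods sitting inside the metric balls and show their disjoint complement is trivial, exactly as in the paper. The only cosmetic difference is your reduction of finite intersections of basic sets to a single $U_{i,a}$, which is unnecessary since $\{U_{i,a}\}$ is already a base (not merely a subbase) for $u_A\tau$.
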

\begin{proof}
(i) Suppose $\{a_n\}\subseteq A_+$ is a countable order basis for $X$. Let $\{U_i\}$ be a countable base at zero of solid sets for a metrizable locally solid topology $\sigma$ coarser than $\tau$. Following the proof of \cite[Theorem 2.3]{me}, one sees that the collection $\{U_{i,a_n}\}$ defines a solid base of neighbourhoods at zero for a locally solid topology $\sigma_1$. This topology is also Hausdorff since if $x\in U_{i,a_n}$ for all $i$ and $n$ then, for fixed $n$, $|x|\wedge a_n \in U_i$ for all $i$ and hence $|x|\wedge a_n=0$ since $\sigma$ is metrizable and hence Hausdorff. By \cite[Lemma 2.2]{LC}, $x=0$. Thus $\sigma_1$ is a locally solid metrizable topology that is clearly coarser than $u_A\tau$.

(ii) Suppose that $u_A\tau$ is submetrizable and let $d$ be a metric that generates a coarser topology than $u_A\tau$. For each $n$, let $B_{\frac{1}{n}}$ be the ball of radius $\frac{1}{n}$ centered at zero for the metric, that is,
\begin{equation}
B_{\frac{1}{n}}=\{x\in X : d(x,0)\leq \tfrac{1}{n}\}.
\end{equation}
Let $\{V_i\}$ be a basis of zero for $\tau$ consisting of solid sets. Since $u_A\tau$ is finer than the metric topology, each $B_{\frac{1}{n}}$ contains some  $V_{i_n,a_n}$ where $0\leq a_n\in A$. Consider $\{a_n\}$. We claim that $B(\{a_n\})=X$. Again, by \cite[Lemma 2.2]{LC}, it suffices to prove that if $x\in X_+$ satisfies $x\wedge a_n=0$ for all $n$ then $x=0$. But $x\wedge a_n=0$ implies that $x \in V_{i_n,a_n}$ and hence $x \in B_{\frac{1}{n}}$ for all $n$. It follows that $x=0$.
\end{proof}

\begin{corollary}\label{submetrizability}
Let $(X,\tau)$ be a locally solid vector lattice and $A$ an ideal of $X$. Then $u_A\tau$ is Riesz submetrizable if and only if $\tau$ is Riesz submetrizable and there is a set in $A$ that is a countable order basis for $X$.
\end{corollary}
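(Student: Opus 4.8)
The plan is to deduce \Cref{submetrizability} directly from \Cref{Rmetric} together with one additional observation that bridges submetrizability and Riesz submetrizability. The ``if'' direction is immediate: if $\tau$ is Riesz submetrizable and $A$ contains a countable order basis for $X$, then \Cref{Rmetric}(i) gives that $u_A\tau$ is Riesz submetrizable, which is in particular submetrizable. The content is therefore in the ``only if'' direction: assuming $u_A\tau$ is Riesz submetrizable, I must recover both that $\tau$ is Riesz submetrizable \emph{and} that $A$ contains a countable order basis.

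For the existence of a countable order basis, I would simply invoke \Cref{Rmetric}(ii): Riesz submetrizability of $u_A\tau$ implies submetrizability of $u_A\tau$, which by part (ii) yields a set in $A$ that is a countable order basis for $X$. So the only genuinely new step is showing that Riesz submetrizability of $u_A\tau$ forces Riesz submetrizability of $\tau$. Here the key point is that $u_A\tau$ is always coarser than $\tau$ (every $\tau$-neighbourhood of zero contains a $u_A\tau$-neighbourhood; indeed $U_{i,a}\subseteq U_i$, and more to the point $U_i$ itself is $u_A\tau$-open-ish — more carefully, since $u_A\tau\subseteq\tau$, any topology coarser than $u_A\tau$ is coarser than $\tau$). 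Thus if $\sigma$ is a metrizable locally solid topology coarser than $u_A\tau$, then $\sigma$ is also coarser than $\tau$, so $\tau$ is Riesz submetrizable. This is the crux, and it is essentially a one-line observation once one recalls $u_A\tau\subseteq\tau$.

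I anticipate the main (minor) obstacle is being careful about the direction of the inequality $u_A\tau\subseteq\tau$ and making sure the chain ``$\sigma$ coarser than $u_A\tau$'' $\Rightarrow$ ``$\sigma$ coarser than $\tau$'' is correctly oriented — coarser topologies have fewer open sets, and $u\tau$ (resp.\ $u_A\tau$) is coarser than $\tau$, so this goes through. One should also note explicitly that every Riesz submetrizable topology is submetrizable (a metrizable locally solid topology is in particular metrizable), so that part (ii) of \Cref{Rmetric} applies. Assembling these: from Riesz submetrizability of $u_A\tau$ we get (a) $\tau$ Riesz submetrizable via $u_A\tau\subseteq\tau$, and (b) a countable order basis in $A$ via \Cref{Rmetric}(ii); conversely \Cref{Rmetric}(i) gives the reverse implication. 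Hence the proof is short:

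\begin{proof}
If $\tau$ is Riesz submetrizable and there is a set in $A$ that is a countable order basis for $X$, then $u_A\tau$ is Riesz submetrizable by \Cref{Rmetric}(i). Conversely, suppose $u_A\tau$ is Riesz submetrizable, and let $\sigma$ be a metrizable locally solid topology coarser than $u_A\tau$. Since $u_A\tau$ is coarser than $\tau$, $\sigma$ is also coarser than $\tau$, so $\tau$ is Riesz submetrizable. Moreover, a Riesz submetrizable topology is submetrizable, so $u_A\tau$ is submetrizable and \Cref{Rmetric}(ii) provides a set in $A$ that is a countable order basis for $X$.
\end{proof}
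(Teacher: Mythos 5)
Your proof is correct and is exactly the argument the paper intends (the paper states the corollary without proof, as an immediate consequence of \Cref{Rmetric} plus the standard fact that $u_A\tau\subseteq\tau$). One small slip in your exploratory remarks: the inclusion goes $U_i\subseteq U_{i,a}$ (by solidity of $U_i$), not $U_{i,a}\subseteq U_i$ — but your final proof only uses $u_A\tau\subseteq\tau$, which is correct, so nothing is affected.
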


Compare with the corresponding result in \cite{KMT}. Note that, in a Banach lattice $X$, a weak unit $e$ can be constructed from a countable order basis $\{e_n\}\subseteq X_+$ via the formula $e:=\sum_{n=1}^{\infty}\frac{1}{2^n}\frac{e_n}{1+\|e_n\|}$. \Cref{submetrizability} also answers a slightly modified version of a question posed on page 14 of \cite{DEM utau}.

\begin{remark}\label{unmet}
Note, in particular, that if $\tau$ is unbounded and Riesz (sub)metrizable then $X$ has a countable order basis.
\end{remark}

\section{Unbounded topologies generated by order ideals}
Let $A$ and $B$ be ideals of a vector lattice $X$, and assume $\tau$ and $\sigma$ are locally solid topologies on $X$. As explained earlier in the paper, and thoroughly in \cite{me}, one can form the topologies $u_A\tau$ and $u_B\sigma$ on $X$. It is then natural to ask how $u_A\tau$ and $u_B\sigma$ relate.  This question was already considered for Banach lattices in \cite{KLT} and was extended in \cite{me}. It has been shown that $u_A\tau=u_B\sigma$ in the following two cases:

 \begin{itemize}
   \item $A$ and $B$ are order dense in $X$ and $\tau$ and $\sigma$ are both Hausdorff Lebesgue topologies on $X$.
   \item $\tau=\sigma$ and $\overline{A}^{\tau}=\overline{B}^{\tau}$.
 \end{itemize}

  In this section, we consider the general case. The results are not only of intrinsic interest, but will be utilized shortly when we characterize metrizability of unbounded topologies.

  Before we state and prove \Cref{un topology by ideals} we need to recall some basic facts on $C(K)$-representations of vector lattices. Suppose $X$ is a vector lattice with a strong unit $u$. For $x\in X$ we define
$$\|x\|_u:=\inf\{\lambda\geq 0:|x|\leq \lambda u\}.$$ It is a standard fact that $\|\cdot\|_u$ defines a lattice norm on $X$, and if $X$ is uniformly complete, then $(X,\|\cdot\|_u)$ is an AM-space with a strong unit $u$. By Kakutani's representation theorem  \cite[Theorem 4.29]{Aliprantis:06}, $(X,\|\cdot\|_u)$ is lattice isometric to some $C(K)$-space for a (unique up to a homeomorphism) compact Hausdorff space $K$. This representation can be taken such that the vector $u$ corresponds to the constant function $\one$. If $X$ is not uniformly complete, consider its order completion $X^\delta$. Then $X$ is an order dense and majorizing sublattice of $X^\delta$. Since order complete vector lattices are uniformly complete and $u$ is also a strong unit for $X^\delta$, by the previous case $X$ is lattice isomorphic to an order dense and majorizing sublattice of $C(K)$ for some compact Hausdorff space $K$.

\begin{theorem}\label{un topology by ideals}
Let $A$ and $B$ be ideals of a vector lattice $X$, and suppose $\tau$ and $\sigma$ are locally solid topologies on $X$. If $u_B\sigma \subseteq u_A\tau$ as topologies on $X$, then $\overline{A\cap B}^{\sigma}=\overline{B}^{\sigma}$.
\end{theorem}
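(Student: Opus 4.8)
The plan is to prove the equivalent statement $B_+\subseteq\overline{A\cap B}^\sigma$. Indeed, the inclusion $\overline{A\cap B}^\sigma\subseteq\overline{B}^\sigma$ is automatic, and since $\overline{A\cap B}^\sigma$ is a $\sigma$-closed subspace of $X$ and every $b\in B$ splits as $b^+-b^-$ with $b^\pm\in B_+$, it is enough to get each positive element of $B$ into $\overline{A\cap B}^\sigma$. So fix $b\in B_+$ and an arbitrary solid $\sigma$-neighbourhood $V$ of zero; the goal is to produce $c\in A\cap B$ with $b-c\in V$. The candidate will be $c:=b\wedge na$ for a suitable $n\in\NN$, for which $c\in A\cap B$ holds automatically (it is dominated by $na\in A$ and by $b\in B$) and $b-c=(b-na)^+$.

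Next I would translate the topological hypothesis into a concrete containment of basic neighbourhoods. The set $V_b:=\{x\in X:\,|x|\wedge b\in V\}$ is a $u_B\sigma$-neighbourhood of zero, hence by assumption also a $u_A\tau$-neighbourhood of zero; consolidating the finitely many basic $u_A\tau$-neighbourhoods that fit inside it — replacing the finitely many ideal elements by their supremum $a\in A_+$ and the finitely many solid $\tau$-neighbourhoods by their intersection $U$, and using solidity — one obtains $\{x\in X:\,|x|\wedge a\in U\}\subseteq V_b$ for a single $a\in A_+$ and a single solid $\tau$-neighbourhood $U$ of zero. Now if $(b-na)^+\wedge a\in U$, then $(b-na)^+\in V_b$, and since $(b-na)^+\le b$ this says $(b-na)^+=(b-na)^+\wedge b\in V$, which is exactly $b-c\in V$ with $c=b\wedge na$. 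So everything reduces to showing $(b-na)^+\wedge a\in U$ for some $n$.

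The crux is the inequality $(b-na)^+\wedge a\le\tfrac1n b$ for all $n\in\NN$. This is a purely lattice-theoretic relation among $a$ and $b$, so it may be checked inside the principal ideal $I_{a\vee b}$, which has $a\vee b$ as a strong unit; by Kakutani's representation theorem $I_{a\vee b}$ is lattice isomorphic to an (order dense, majorizing) sublattice of some $C(K)$ with $a\vee b$ sent to $\one$, and since a lattice isomorphism preserves $\wedge$, $\vee$, scalar multiples and order, it suffices to verify the inequality in $C(K)$ — where it is immediate pointwise: at points where $(b-na)^+$ vanishes the left side is $0\le\tfrac1n b$, and at points $t$ with $(b-na)^+(t)=b(t)-na(t)>0$ one has $a(t)<\tfrac1n b(t)$, so $\bigl((b-na)^+\wedge a\bigr)(t)\le a(t)<\tfrac1n b(t)$. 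Granting this, $0\le(b-na)^+\wedge a\le\tfrac1n b\goestau 0$, and local solidity of $\tau$ forces $(b-na)^+\wedge a\goestau 0$; in particular $(b-na)^+\wedge a\in U$ for $n$ large, which finishes the argument.

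The step I expect to be the genuine obstacle — and the reason $C(K)$-representations are recalled just before the statement — is precisely forcing $(b-na)^+\wedge a$ into the $\tau$-neighbourhood $U$. One is tempted to argue only that $(b-na)^+\wedge a\to 0$ in order (which is easy) and then invoke a Lebesgue-type property; but here $\tau$ is an arbitrary locally solid topology, not assumed Lebesgue, so that route is closed. What is needed instead is the explicit domination of $(b-na)^+\wedge a$ by a fixed scalar multiple $\tfrac1n b$ of a single vector, and producing that inequality cleanly is exactly what the Kakutani representation delivers.
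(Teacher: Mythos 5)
Your proof is correct and follows essentially the same route as the paper: the same reduction to showing $B_+\subseteq\overline{A\cap B}^{\sigma}$, the same approximants $b\wedge na$ (so that $b-(b\wedge na)=(b-na)^+$), the same translation of $u_B\sigma\subseteq u_A\tau$ into a containment of basic neighbourhoods, and the same use of the Kakutani representation of the principal ideal $I_{a\vee b}$. The only (minor) difference is that you verify the explicit pointwise bound $(b-na)^+\wedge a\le\tfrac{1}{n}b$ and conclude directly by local solidity of $\tau$, whereas the paper establishes pointwise convergence of the decreasing sequence $(b-na)^+\wedge a$ and invokes Dini's theorem to dominate it by $\tfrac{1}{N}(a\vee b)$; your quantitative inequality is a slightly more direct way of accomplishing the same step.
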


\begin{proof}
It suffices to prove that $B\subseteq \overline{A\cap B}^{\sigma}$. Let $u \in B_+$ and $U$ a solid $\sigma$-neighbourhood of zero. Consider $U_{u}:=\{x\in X: |x|\wedge u\in U\}$. By assumption, there exists $v\in A_+$ and $V$ a solid $\tau$-neighbourhood of zero such that $V_{v}\subseteq U_{u,}$ where $V_{v}:=\{x\in X: |x|\wedge v\in V\}$. This means that for all $x\in X_+$, if $x\wedge v\in V$ then $x\wedge u\in U$.

Let $x_n=(u-nv)^+$. Clearly, $x_n\downarrow$. Put $y_n=x_n\wedge v$. Then $(y_n)\subseteq A_+$, and $y_n\downarrow$.

Consider $I_{u\vee v}$, the ideal generated by $u \vee v$ in $X$. Since $0\leq y_n\leq x_n\leq u\leq u\vee v$, $(x_n)$ and $(y_n)$ are in $I_{u\vee v}$. Also, $u$ and $v$ are in $I_{u\vee v}$. We identify $I_{u\vee v}$ as an order dense majorizing sublattice of $C(K)$ for some $K$ such that $u\vee v$ corresponds to the constant function $\one$.

We next prove that $(y_n)$ converges to zero point-wise in $C(K)$. Take $t\in K$. If $(u\wedge v)(t)=0$ then $0\leq y_n(t)=x_n(t)\wedge v(t)\leq u(t)\wedge v(t)=0.$

If $(u\wedge v)(t)\neq 0$ then $v(t)>0$, so that $x_n(t)=(u-u\wedge nv)(t)=u(t)-u(t)\wedge nv(t)=0$ for sufficiently large $n$. Thus, for large enough $n$, $y_n(t)=0.$ By Dini's classical theorem $(y_n)$ converges uniformly to zero in $C(K)$. Therefore, for any $N\in \mathbb{N}$ there exists $n_0\in \mathbb{N}$ such that for all $n \geq n_0$ we have $y_n \leq \frac{1}{N}(u\vee v).$ We now go back to $X$. Clearly, $\frac{1}{N}(u\vee v)\xrightarrow{u_A\tau}0$ in $N$ in $X$. Since $u_A\tau$ is locally solid, $y_n\xrightarrow{u_A\tau}0$.

Since $y_n=x_n\wedge v$ is an order bounded sequence in $A$, this implies that $x_n\wedge v\xrightarrow{\tau}0$. Therefore, there exists $m_0$ such that for all $m \geq m_0$, $x_m\wedge v \in V$ and hence $x_m\wedge u\in U$. Since $0 \leq x_m \leq u$, we conclude that for all $m\geq m_0$ we have $x_m \in U$. In particular, $u-u\wedge m_0v\in U$. Since $u \wedge m_0v \in A\cap B$, we conclude $u \in \overline{A\cap B}^{\sigma}$. This proves $B\subseteq \overline{A
\cap B}^{\sigma}$.

\end{proof}

\Cref{un topology by ideals} has many interesting and important consequences. First off, it answers \cite[Question 9.6]{me} affirmatively:

\begin{corollary}\label{closure of ideals give same utau}
Let $A$ and $B$ be ideals of a locally solid vector lattice $(X,\tau)$. Then $u_A\tau=u_B\tau$ iff $\overline{A}^\tau=\overline{B}^\tau$. In particular, $u_A\tau=u\tau$ iff $\overline{A}^{\tau}=X$.
\end{corollary}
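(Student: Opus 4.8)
The plan is to read the equivalence off \Cref{un topology by ideals}, which already carries all the substantive work; the corollary then needs only that theorem together with two elementary remarks.

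\emph{Reverse implication.} If $\overline{A}^\tau=\overline{B}^\tau$, then $u_A\tau=u_B\tau$ is exactly the second of the two previously known cases recalled at the start of this section (the case $\sigma=\tau$ with $\overline{A}^\tau=\overline{B}^\tau$), so I would simply invoke \cite{me} here and move on.

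\emph{Forward implication.} Assuming $u_A\tau=u_B\tau$, I would apply \Cref{un topology by ideals} with $\sigma:=\tau$ to the inclusion $u_B\tau\subseteq u_A\tau$, obtaining $\overline{A\cap B}^\tau=\overline{B}^\tau$; interchanging the roles of $A$ and $B$ and applying the theorem to $u_A\tau\subseteq u_B\tau$ gives $\overline{A\cap B}^\tau=\overline{A}^\tau$. Chaining the two equalities through the common term $\overline{A\cap B}^\tau$ then yields $\overline{A}^\tau=\overline{B}^\tau$.

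\emph{The ``in particular'' clause.} This is the case $B=X$: the whole space is an ideal of itself, $u_X\tau=u\tau$ by definition, and $\overline{X}^\tau=X$, so the main equivalence specializes to $u_A\tau=u\tau$ iff $\overline{A}^\tau=X$. I do not expect any genuine obstacle here — all the difficulty (the $C(K)$-representation and the appeal to Dini's theorem) is spent in proving \Cref{un topology by ideals}. The two points to be careful about are to take $\tau$ and $\sigma$ equal when invoking that theorem, and to remember that the reverse implication must be supplied separately, since \Cref{un topology by ideals} only converts topology inclusions into closure equalities, not conversely.
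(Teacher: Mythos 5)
Your proposal is correct and is exactly the intended argument: the paper states this as a corollary of \Cref{un topology by ideals} without a written proof, and the forward direction via two applications of that theorem with $\sigma=\tau$ (chained through $\overline{A\cap B}^\tau$), plus the reverse direction from the previously known case $\tau=\sigma$, $\overline{A}^\tau=\overline{B}^\tau$, is precisely how it follows. The specialization $B=X$ for the ``in particular'' clause is also as intended.
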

It also gives the following corollary that nicely complements \cite[Proposition 9.4]{me}:
\begin{corollary}\label{not utau}
Let $A$ be an ideal of a Hausdorff locally solid vector lattice $(X,\tau)$. Then $u_A\tau$ is Hausdorff and not equal $u\tau$ iff $A$ is order dense but not $\tau$-dense in $X$.
\end{corollary}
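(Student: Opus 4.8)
The plan is to read this off from two ingredients already in hand: the criterion recalled just before \Cref{Rmetric}, namely that $u_A\tau$ is Hausdorff if and only if $\tau$ is Hausdorff and $A$ is order dense in $X$; and \Cref{closure of ideals give same utau}, whose special case states that $u_A\tau=u\tau$ if and only if $\overline{A}^{\tau}=X$. Since $\tau$ is assumed Hausdorff throughout, the first criterion says Hausdorffness of $u_A\tau$ is equivalent to order density of $A$, while \Cref{closure of ideals give same utau} says $u_A\tau\neq u\tau$ is equivalent to $A$ failing to be $\tau$-dense. The corollary is precisely the conjunction of these two equivalences, so the proof amounts to assembling them.

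Concretely, for the forward implication I would assume $u_A\tau$ is Hausdorff and $u_A\tau\neq u\tau$. The first assumption, combined with the standing hypothesis that $\tau$ is Hausdorff, forces $A$ to be order dense in $X$ by the Hausdorffness criterion; the second assumption, together with \Cref{closure of ideals give same utau}, gives $\overline{A}^{\tau}\neq X$, i.e. $A$ is not $\tau$-dense. For the converse, if $A$ is order dense but not $\tau$-dense, then the Hausdorffness criterion immediately gives that $u_A\tau$ is Hausdorff, while $\overline{A}^{\tau}\neq X$ combined with \Cref{closure of ideals give same utau} gives $u_A\tau\neq u\tau$.

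I do not anticipate any genuine obstacle, since the substantive work was done in \Cref{un topology by ideals}. The only point that might merit an explicit sentence is recalling why order density of $A$ is the exact condition for Hausdorffness of $u_A\tau$: working with a solid base $\{U_i\}$ at zero for $\tau$, if $x\in X_+$ lies in every $U_{i,a}$ then $|x|\wedge a\in U_i$ for all $i$ and all $a\in A_+$, so $|x|\wedge a=0$ for every $a\in A_+$ because $\tau$ is Hausdorff, whence $x=0$ by order density of $A$ (using \cite[Lemma 2.2]{LC}); conversely, if $A$ is not order dense one produces a nonzero $x\in X_+$ with $x\wedge a=0$ for all $a\in A_+$, which then lies in every $U_{i,a}$, so $u_A\tau$ is not Hausdorff.
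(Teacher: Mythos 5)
Your proof is correct and is essentially the argument the paper intends: the corollary is presented as an immediate consequence of \Cref{closure of ideals give same utau} (equivalently, \Cref{un topology by ideals}) combined with the previously recalled fact that $u_A\tau$ is Hausdorff iff $\tau$ is Hausdorff and $A$ is order dense. Your extra paragraph justifying the Hausdorffness criterion is a correct elaboration of a fact the paper simply cites.
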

\begin{corollary}
Let $(X,\tau)$ be a Hausdorff locally solid vector lattice. Then $\tau$ is minimal iff $\tau=u_A\tau$ for every order dense ideal $A$ of $X$.
\end{corollary}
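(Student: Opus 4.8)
The plan is to obtain both implications from Theorem~\ref{a.e. implies measure} together with the corollaries already established.

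For the forward direction, suppose $\tau$ is minimal. By Theorem~\ref{a.e. implies measure}, $\tau=u\tau=u_X\tau$. If $A$ is an order dense ideal, then $u_A\tau$ is Hausdorff (since $\tau$ is Hausdorff and $A$ is order dense) and locally solid, and its solid base $\{U_{i,a}:i\in I,\ a\in A_+\}$ is a subfamily of the solid base $\{U_{i,u}:i\in I,\ u\in X_+\}$ of $u_X\tau$, so $u_A\tau\subseteq u_X\tau=\tau$. Minimality of $\tau$ forces $u_A\tau=\tau$.

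For the reverse direction, assume $\tau=u_A\tau$ for every order dense ideal $A$. Taking $A=X$ already gives $\tau=u\tau$, so by Theorem~\ref{a.e. implies measure} it remains only to prove that $\tau$ is Lebesgue. By \Cref{closure of ideals give same utau}, $u_A\tau=u\tau$ iff $\overline A^{\tau}=X$; hence the hypothesis says exactly that $\tau=u\tau$ and that every order dense ideal of $X$ is $\tau$-dense, so in particular $X$ has no proper $\tau$-closed order dense ideal. My main tool will be the order-continuity ideal $X^{\circ}$, consisting of all $x\in X$ such that $y_\alpha\xrightarrow{\tau}0$ whenever $0\le y_\alpha\le|x|$ and $y_\alpha\downarrow 0$; recall that $X^{\circ}$ is an ideal and that $\tau$ is Lebesgue iff $X^{\circ}=X$. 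Unwinding the definitions of $u_A\tau$ and of ``Lebesgue'', one checks that for any ideal $A$ the topology $u_A\tau$ is Lebesgue if and only if $A\subseteq X^{\circ}$ (an order-null net dominated by $a$ is $\tau$-null for every $a\in A_+$ precisely when $A_+\subseteq X^{\circ}$). Therefore, if $X^{\circ}$ is order dense, applying the hypothesis to the order dense ideal $A=X^{\circ}$ gives $\tau=u_{X^{\circ}}\tau$, which is Lebesgue by this criterion; so $\tau$ is Lebesgue, and being also unbounded it is minimal.

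It remains to rule out the case that $X^{\circ}$ is \emph{not} order dense, and this is where I expect the real work. In that case $C:=(X^{\circ})^{d}$ is a nonzero band, and for every $0<c\in C$ the principal ideal $I_c$, with the topology induced by $\tau$, has trivial order-continuity ideal (since $I_c\perp X^{\circ}$), i.e.\ $\tau$ is nowhere order continuous on $C$. Representing $I_c$ as an order dense majorizing sublattice of a $C(K)$ with $c\mapsto\one$, as recalled before \Cref{un topology by ideals}, one first notes that $K$ has no isolated points (an isolated point would produce a nonzero order-continuous vector in $I_c$). The intended endgame is then to use this $C(K)$-picture --- together with $\tau=u\tau$ and an exhaustion of $C$ by a maximal disjoint family supplied by Zorn's lemma --- to manufacture a \emph{proper} order dense ideal $A_0$ of $X$ that is $\tau$-closed, which contradicts the hypothesis via \Cref{closure of ideals give same utau}. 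The passage from $C$ back to $X$ is harmless: if $q_\gamma=s_\gamma+t_\gamma$ with $s_\gamma\in X^{\circ}\subseteq C^{d}$ and $t_\gamma\in C$, disjointness gives $|q_\gamma-c|=s_\gamma+|t_\gamma-c|\ge|t_\gamma-c|$ for $c\in C_+$, so $\tau$-convergence of $q_\gamma$ to $c$ forces $t_\gamma\xrightarrow{\tau}c$, which lets one transfer density statements between $C$ and $X$. Thus the crux --- and the step I expect to be hardest --- is purely local: producing a proper $\tau$-closed order dense ideal inside a vector lattice whose (unbounded) topology is nowhere order continuous. For comparison, the ``good'' case above can also be closed by a direct estimate: if $0\le w_\alpha\downarrow0$ with $w_\alpha\le v$, and $q_\gamma\xrightarrow{\tau}v$ with $q_\gamma\in X^{\circ}$, $0\le q_\gamma\le v$, then $w_\alpha\wedge q_\gamma\xrightarrow{\tau}0$ for each fixed $\gamma$, and $0\le w_\alpha\le w_\alpha\wedge q_\gamma+|v-q_\gamma|$ forces $w_\alpha\xrightarrow{\tau}0$.
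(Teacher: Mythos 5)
Your forward direction is fine, and your reduction of the reverse direction is exactly the paper's: take $A=X$ to get $\tau=u\tau$, and use \Cref{closure of ideals give same utau} to translate the hypothesis into ``$\tau$ is unbounded and every order dense ideal of $X$ is $\tau$-dense,'' so that all that remains is to show $\tau$ is Lebesgue. The gap is in how you close this last step. You attempt to prove from scratch that ``every order dense ideal is $\tau$-dense $\Rightarrow$ Lebesgue,'' splitting on whether your order-continuity ideal $X^{\circ}$ is order dense. The case where $X^{\circ}$ is order dense is genuinely complete (your criterion that $u_A\tau$ is Lebesgue iff $A\subseteq X^{\circ}$ is correct, and applying the hypothesis to $A=X^{\circ}$ does the job). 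But the complementary case — where $(X^{\circ})^{d}\neq\{0\}$ and you must manufacture a proper $\tau$-closed order dense ideal inside a band on which $\tau$ is nowhere order continuous — is precisely the hard content of the implication, and you explicitly leave it as an ``intended endgame'' with a $C(K)$-picture and a Zorn's lemma exhaustion that are never carried out. As written, the proof is therefore incomplete: nothing you have proved rules out a Hausdorff unbounded topology with $X^{\circ}$ not order dense for which every order dense ideal is nonetheless $\tau$-dense.

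The paper avoids this entirely with a single citation: by \cite[Theorem 3.8]{AB03}, a Hausdorff locally solid topology fails to be Lebesgue only if some order dense ideal of $X$ fails to be $\tau$-dense. Combined with \Cref{closure of ideals give same utau} and $\tau=u\tau$, that ideal $A$ gives $u_A\tau\neq u\tau=\tau$, contradicting the hypothesis. So the missing ingredient in your argument is exactly this classical characterization of the Lebesgue property; once you invoke it, your case analysis on $X^{\circ}$ becomes unnecessary. If you insist on a self-contained proof, the burden is to complete your second case, i.e.\ to produce the proper $\tau$-closed order dense ideal — which amounts to reproving the nontrivial direction of \cite[Theorem 3.8]{AB03}.
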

\begin{proof}
The forward direction is immediate by \cite[Theorem 6.4]{me} combined with \cite[Theorem 9.7]{me}. Suppose $\tau=u_A\tau$ for every order dense ideal $A$ of $X$, but $\tau$ is not minimal. Since $\tau$ is clearly unbounded, it follows from \cite[Theorem 6.4]{me} that $\tau$ is not Lebesgue. By \cite[Theorem 3.8]{AB03}, there exists an order dense ideal $A$ of $X$ that is not $\tau$-dense. By \Cref{closure of ideals give same utau} and the observation that $\tau$ is unbounded, $\tau=u\tau\neq u_A\tau$, a contradiction.
\end{proof}
The next result can be thought of as a topological version of \cite[Lemma 2.2]{LC}. A result of this type was first proved in \cite{DOT}; in that paper, it was shown that a quasi-interior point always witnesses unbounded norm convergence. The converse was proved in \cite[Theorem 3.1]{KMT}; a positive vector is a quasi-interior point iff it witnesses unbounded norm convergence. The \cite{KMT} result was extended (see \cite[Theorem 2]{DEM utau}) to the setting of sequentially complete locally solid vector lattices. The following corollary improves the result of \cite{DEM utau} significantly. Not only does it drop the assumption of sequential completeness, it also characterizes general sets that witness unbounded convergence; they are precisely the sets which generate topologically dense order ideals.

\begin{corollary}\label{Testing}
Let $(X,\tau)$ be a  locally solid vector lattice and $A \subseteq X_+$. TFAE:
\begin{enumerate}
\item $u_{I(A)}\tau=u\tau$;
\item $\overline{I(A)}^{\tau}=X$;
\item For any net $(x_{\alpha})\subseteq X_+$, $x_{\alpha}\xrightarrow{u\tau}0 \Leftrightarrow x_{\alpha}\wedge a\xrightarrow{\tau}0$ for all $a\in A$.
\end{enumerate}
\end{corollary}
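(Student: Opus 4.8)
The equivalence of (i) and (ii) costs nothing: it is precisely the final assertion of \Cref{closure of ideals give same utau} applied to the ideal $I(A)$, since $u_{I(A)}\tau=u\tau$ if and only if $\overline{I(A)}^{\tau}=X$. So the plan is to prove (i)$\Leftrightarrow$(iii), and everything rests on the following elementary reduction: \emph{for a positive net $(x_{\alpha})$ one has $x_{\alpha}\xrightarrow{u_{I(A)}\tau}0$ if and only if $x_{\alpha}\wedge a\xrightarrow{\tau}0$ for every $a\in A$.} Unwinding the definition of the base of $u_{I(A)}\tau$ at zero, a positive net converges to $0$ in $u_{I(A)}\tau$ exactly when $x_{\alpha}\wedge b\xrightarrow{\tau}0$ for every $b\in I(A)_{+}$; since $A\subseteq I(A)_{+}$, one implication is immediate, and the substance is the converse.

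To prove the converse I would fix $b\in I(A)_{+}$ and pick $a_{1},\dots,a_{n}\in A$ together with a positive integer $m$ with $b\le m(a_{1}\vee\dots\vee a_{n})$. The distributive law gives $x_{\alpha}\wedge(a_{1}\vee\dots\vee a_{n})=\bigvee_{i=1}^{n}(x_{\alpha}\wedge a_{i})$, and since $0\le\bigvee_{i=1}^{n}(x_{\alpha}\wedge a_{i})\le\sum_{i=1}^{n}(x_{\alpha}\wedge a_{i})$, the assumption $x_{\alpha}\wedge a_{i}\xrightarrow{\tau}0$ for each $i$ forces $x_{\alpha}\wedge(a_{1}\vee\dots\vee a_{n})\xrightarrow{\tau}0$ by local solidity. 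Then $0\le x_{\alpha}\wedge b\le m\bigl(x_{\alpha}\wedge(a_{1}\vee\dots\vee a_{n})\bigr)$, so local solidity again yields $x_{\alpha}\wedge b\xrightarrow{\tau}0$. This establishes the reduction. With it in hand, statement (iii) says precisely that $u\tau$-convergence and $u_{I(A)}\tau$-convergence agree on positive nets; because both topologies are linear and locally solid (a net is null iff its modulus is, and convergence to a general point reduces by translation to convergence to $0$), this is exactly the assertion $u\tau=u_{I(A)}\tau$, which is (i).

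Making this last step explicit: for (i)$\Rightarrow$(iii), given a positive net $(x_{\alpha})$ we have $x_{\alpha}\xrightarrow{u\tau}0 \Leftrightarrow x_{\alpha}\xrightarrow{u_{I(A)}\tau}0$ by (i), and the right-hand side is equivalent to $x_{\alpha}\wedge a\xrightarrow{\tau}0$ for all $a\in A$ by the reduction. For (iii)$\Rightarrow$(i), recall that $u_{I(A)}\tau$ is automatically coarser than $u_{X}\tau=u\tau$, so it suffices to check that every $u_{I(A)}\tau$-null net is $u\tau$-null; by local solidity this reduces to positive nets, and if $x_{\alpha}\ge 0$ with $x_{\alpha}\xrightarrow{u_{I(A)}\tau}0$ then $x_{\alpha}\wedge a\xrightarrow{\tau}0$ for all $a\in A$ (as $A\subseteq I(A)_{+}$), whence $x_{\alpha}\xrightarrow{u\tau}0$ by (iii). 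I do not expect a serious obstacle here: the only points needing care are the lattice identity $x\wedge(a_{1}\vee\dots\vee a_{n})=\bigvee_{i}(x\wedge a_{i})$ together with the integer-multiple domination, and the routine but necessary passage between equality of locally solid topologies and agreement of their null nets.
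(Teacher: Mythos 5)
Your argument is correct. It shares its first step with the paper — the equivalence (i)$\Leftrightarrow$(ii) is read off from \Cref{closure of ideals give same utau} in both cases — but it diverges after that. The paper disposes of (ii)$\Rightarrow$(iii) by citing an external result (Proposition 9.9 of the first author's earlier preprint) and declares (iii)$\Rightarrow$(i) clear, whereas you prove a self-contained reduction lemma: for a positive net, $u_{I(A)}\tau$-convergence to zero is witnessed by the generating set $A$ alone, because any $b\in I(A)_+$ is dominated by $m(a_1\vee\dots\vee a_n)$ and the chain of estimates $x_\alpha\wedge b\le m\bigl(x_\alpha\wedge(a_1\vee\dots\vee a_n)\bigr)=m\bigvee_i(x_\alpha\wedge a_i)\le m\sum_i(x_\alpha\wedge a_i)$ passes $\tau$-nullity from the $x_\alpha\wedge a_i$ to $x_\alpha\wedge b$ via local solidity. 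All the individual steps check out: the distributivity identity, the inequality $x\wedge mc\le m(x\wedge c)$ for positive elements and $m\ge 1$, and the standard reduction of equality of locally solid linear topologies to agreement of their positive null nets. What your route buys is that the corollary becomes independent of the cited proposition (which, in effect, you have reproved as your reduction lemma); what the paper's route buys is brevity, since the external result already packages exactly the content of (ii)$\Rightarrow$(iii). Either way the logical content is the same, and your write-up would serve as a complete replacement for the citation.
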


\begin{proof}
(i)$\Leftrightarrow$(ii) follows from \Cref{closure of ideals give same utau}. (ii)$\Rightarrow$(iii) is \cite[Proposition 9.9]{me}. (iii)$\Rightarrow$(i) is clear.
\end{proof}
Next we prove that every locally solid vector lattice whose unbounded topology is metrizable admits an at most countable set which generates a dense order ideal.
This result will play an important role in \Cref{metrizability qip} where we consider metrizability of unbounded topologies. Note that we do not assume $\tau$ is metrizable.

\begin{proposition}\label{countably generated closed ideal}
Let $(X,\tau)$ be a locally solid vector lattice such that $u\tau$ is metrizable. Then there exists $e_n\in  X_+$ ($n\in \mathbb{N}$) such that $\overline{I(\{e_n\})}^{\tau}=X$.
\end{proposition}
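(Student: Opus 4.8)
The plan is to exploit the fact that metrizability of $u\tau$ provides a genuine countable \emph{base} at zero --- not merely submetrizability --- and to feed this into \Cref{closure of ideals give same utau}. To set up, I would fix a base $\{U_i\}_{i\in I}$ at zero for $\tau$ consisting of solid sets, so that $\{U_{i,u}:\ i\in I,\ u\in X_+\}$ is a base at zero for $u\tau$. Since $u\tau$ is metrizable it is Hausdorff and admits a countable base at zero $\{W_n\}_{n\in\mathbb{N}}$ of solid sets with $\bigcap_n W_n=\{0\}$.

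First I would extract from this a countable base consisting of \emph{basic} $u\tau$-neighbourhoods: each $W_n$, being a $u\tau$-neighbourhood of zero, contains some $U_{i_n,e_n}$ with $i_n\in I$ and $e_n\in X_+$; and since $\{W_n\}$ is a base at zero, so is the smaller family $\{U_{i_n,e_n}\}_{n\in\mathbb{N}}$, because any $u\tau$-neighbourhood of zero contains some $W_m\supseteq U_{i_m,e_m}$. In particular $\bigcap_n U_{i_n,e_n}=\{0\}$.

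Next I would identify the ideal. Put $A:=I(\{e_n\})$. Since each $e_n\in A_+$, every $U_{i_n,e_n}$ is a basic $u_A\tau$-neighbourhood of zero; as $\{U_{i_n,e_n}\}$ is a base at zero for $u\tau$, this forces $u\tau\subseteq u_A\tau$, while the inclusion $u_A\tau\subseteq u_X\tau=u\tau$ holds because $A\subseteq X$. Hence $u_A\tau=u\tau$, and \Cref{closure of ideals give same utau} yields $\overline{A}^{\tau}=\overline{I(\{e_n\})}^{\tau}=X$, which is the desired conclusion with $\{e_n\}$ as the witnessing countable set.

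Up through the extraction step this is essentially the argument behind \Cref{Rmetric}(ii); there, submetrizability only yielded, via \cite[Lemma 2.2]{LC}, that the $e_n$ form a countable order basis. The extra strength used here is that honest metrizability of $u\tau$ delivers a countable \emph{base}, so the $U_{i_n,e_n}$ actually generate $u\tau$, and \Cref{closure of ideals give same utau} then promotes ``countable order basis'' to ``$\tau$-dense generated ideal''. The only delicate point --- the nearest thing to an obstacle --- is that first step: one must pass from the arbitrary solid $u\tau$-neighbourhoods $W_n$ to neighbourhoods of the specific form $U_{i_n,e_n}$ before forming the ideal, since a generic solid $u\tau$-neighbourhood need not itself have that form.
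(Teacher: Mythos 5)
Your proof is correct and follows essentially the same route as the paper's: both extract the $e_n$ by placing basic neighbourhoods $U_{i_n,e_n}$ inside the members of a countable base (the paper uses the metric balls $B_{1/n}$), and both conclude $\tau$-density of $I(\{e_n\})$ from the consequences of \Cref{un topology by ideals}. The only (cosmetic) difference is that you phrase the last step as the topology identity $u_{I(\{e_n\})}\tau=u\tau$ and invoke \Cref{closure of ideals give same utau}, whereas the paper verifies the equivalent net criterion of \Cref{Testing}(iii).
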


\begin{proof}
Assume $u\tau$ is metrizable and $\{U_i\}$ is a base at zero consisting of solid (but not even necessarily countably many) sets for $\tau$. Let $d$ be a metric for $u\tau$  and $B_{\frac{1}{n}}$ be the ball at zero of radius $\frac{1}{n}$ for $d$. Then there exists $i_n$ and $e_n \geq 0$ such that \begin{equation}
U_{i_n,e_n} \subseteq B_{\frac{1}{n}}.
\end{equation}
This gives a natural choice of $e_n$ and, indeed, it is straightforward to show that for any net $(x_{\alpha})\subseteq X_+$, $x_{\alpha}\xrightarrow{u\tau}0 \Leftrightarrow x_{\alpha}\wedge e_n\xrightarrow{\tau}0$ for all $n$. This implies that $\overline{I(\{e_n\})}^{\tau}=X$ and that concludes the proof.
\end{proof}

By comparing \Cref{countably generated closed ideal} with \Cref{Rmetric} one should notice that metrizability of $u\tau$ gives the existence of a countable set which generates a topologically dense ideal while submetrizability of $u\tau$ merely gives the existence of a countable order basis.
\\

With \Cref{countably generated closed ideal} in mind, we present the following sequential variant of \Cref{Testing}:
\begin{proposition}\label{qip2}
  Let $(X,\tau)$ be a Hausdorff locally solid vector lattice and $(e_n)$ a positive increasing sequence in $X$. TFAE:
  \begin{enumerate}
  \item\label{qip-un2} $\overline{I(\{e_n\})}^\tau=X$;
  \item\label{qip-seq2} For every sequence $(x_k)$ in $X_+$, if $x_k\wedge
    e_n\goestau 0$ in $k$ for every $n$ then $x_k\goesutau 0$.
  \end{enumerate}
\end{proposition}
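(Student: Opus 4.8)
The implication (i)$\Rightarrow$(ii) is essentially contained in \Cref{Testing}: if $\overline{I(\{e_n\})}^\tau=X$ then $u_{I(\{e_n\})}\tau=u\tau$, and testing $u_{I(\{e_n\})}\tau$-convergence against the solid base $\{V_{i,e_n}\}$ (using that $(e_n)$ is increasing, so the single countable family $\{e_n\}$ suffices to generate the ideal up to order boundedness) shows that $x_k\wedge e_n\goestau 0$ in $k$ for all $n$ forces $x_k\goesutau 0$. The only subtlety is the passage from nets to sequences: part (iii) of \Cref{Testing} is stated for nets, but the forward direction of the argument there works verbatim for sequences, since no net-specific machinery (subnets, etc.) is used — one just observes that $V_{i,e_n}$ is a typical $u\tau=u_{I(\{e_n\})}\tau$-neighbourhood and $x_k\wedge e_n\goestau 0$ eventually lands $x_k$ inside it.

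The real content is (ii)$\Rightarrow$(i). The plan is to prove the contrapositive, or rather to argue directly: assume (ii) holds and show $e:=$ (the ``diagonal'' witness built from $(e_n)$) generates a $\tau$-dense ideal. Concretely, suppose $\overline{I(\{e_n\})}^\tau\neq X$. Then $I(\{e_n\})$ is a proper closed-ideal-generating set, and I would like to manufacture a sequence $(x_k)\subseteq X_+$ with $x_k\wedge e_n\goestau 0$ for every $n$ but $x_k\not\goesutau 0$. Here is where \Cref{un topology by ideals} / \Cref{closure of ideals give same utau} enters: since $\overline{I(\{e_n\})}^\tau\neq X=\overline{X}^\tau$, we get $u_{I(\{e_n\})}\tau\neq u_X\tau=u\tau$, so $u_{I(\{e_n\})}\tau\subsetneq u\tau$ strictly. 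Hence there is a $u\tau$-neighbourhood $W$ of zero that is not a $u_{I(\{e_n\})}\tau$-neighbourhood. Unwinding definitions, $W$ contains some $V_{i,u}$ with $u\in X_+$, and no $V_{j,e_n}$ sits inside $W$. This gives, for each $k$, a point $x_k\in V_{i_k,e_{n_k}}\setminus W$; but I need to arrange that a single sequence works against \emph{all} $e_n$ simultaneously — this is exactly the diagonalization that the increasing hypothesis on $(e_n)$ is designed to facilitate, since $x\wedge e_n\in V$ for the largest relevant index controls all smaller ones.

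The step I expect to be the main obstacle is precisely this diagonalization: producing one sequence $(x_k)$ with $x_k\wedge e_n\goestau 0$ in $k$ for \emph{every} fixed $n$, not merely for the index $n_k$ tied to the $k$-th choice. The cleanest route is probably to reverse the logic: instead of extracting a bad sequence from the failure of (i), one shows that (ii) \emph{implies} that the sets $\{V_{i,e_n}\}$ form a $u\tau$-base at zero when restricted to $X_+$. Indeed, (ii) says exactly that $u\tau$-convergence of positive sequences is detected by the countable family $\{e_n\}$; by a standard argument (cf.\ the proof of \Cref{countably generated closed ideal} run in reverse, together with \cite[Lemma 2.2]{LC}) this forces $x\wedge e_n=0$ for all $n$ $\Rightarrow$ $x=0$ for $x\in X_+$, i.e.\ $\{e_n\}$ generates a band-dense, hence — since we may also deduce $\tau$-density from the convergence statement by a quasi-interior-point-style argument on $x-x\wedge(e_1\vee\cdots\vee e_n)$ — a $\tau$-dense ideal. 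So the proof reduces to: (a) the easy net-to-sequence direction of \Cref{Testing} for (i)$\Rightarrow$(ii), and (b) showing that the sequential testing property in (ii) upgrades to $\overline{I(\{e_n\})}^\tau=X$ via \Cref{closure of ideals give same utau}, the obstacle being to rule out the intermediate possibility that $\{e_n\}$ detects convergence without generating a dense ideal — which \Cref{un topology by ideals} precludes, since $u_{I(\{e_n\})}\tau=u\tau$ is equivalent to $\tau$-density of $I(\{e_n\})$.
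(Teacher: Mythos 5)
Your reading of (i)$\Rightarrow$(ii) is right: it is just the restriction to sequences of \Cref{Testing}, and nothing more is needed there. The problem is (ii)$\Rightarrow$(i), where there is a genuine gap. Both routes you sketch founder on the same point: hypothesis (ii) is a statement about \emph{sequences} of positive elements, whereas \Cref{un topology by ideals} and \Cref{closure of ideals give same utau} compare \emph{topologies}. To invoke ``$u_{I(\{e_n\})}\tau=u\tau$ is equivalent to $\tau$-density of $I(\{e_n\})$'' you would first need to know that the sequential testing property in (ii) upgrades to the topological inclusion $u\tau\subseteq u_{I(\{e_n\})}\tau$ --- but that upgrade is exactly what the proposition asserts, so the appeal is circular. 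Likewise, in the contrapositive version, strictness of the inclusion $u_{I(\{e_n\})}\tau\subsetneq u\tau$ does not by itself yield a single positive \emph{sequence} that is $u_{I(\{e_n\})}\tau$-null but not $u\tau$-null; the ``diagonalization'' you flag is not a technicality to be smoothed over but the missing argument itself.

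What actually closes the gap is to apply (ii) to one concrete sequence. Fix $x\in X_+$ and set $y_n=\bigl(x-(n+1)e_{n+1}\bigr)^+=x-x\wedge(n+1)e_{n+1}$. If one can show, \emph{unconditionally} (with no density hypothesis in hand), that $y_n\wedge e_k\goestau 0$ in $n$ for every fixed $k$, then (ii) gives $y_n\goesutau 0$, and since $0\le y_n\le x$ we get $y_n=y_n\wedge x\goestau 0$, i.e.\ $x\wedge(n+1)e_{n+1}\goestau x$, which is (i). The paper establishes the required estimate $y_n\wedge e_{n+1}\le\tfrac1n x$ by representing the principal ideal $I_{x\vee e_m}$ as an order dense majorizing sublattice of a $C(K)$-space and running a Urysohn/Dini-type pointwise argument there; your proposal contains no substitute for this step, only the phrase ``quasi-interior-point-style argument.'' Note also that the sequence you name, $x-x\wedge(e_1\vee\cdots\vee e_n)=x-x\wedge e_n$, is the wrong one: it need not tend to zero even when (i) holds (take $e_n$ constantly equal to a quasi-interior point that is not a strong unit, e.g.\ $\one$ in $L_1[0,1]$); the growing factor in $x\wedge ne_n$ is essential. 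Finally, the band-density of $\{e_n\}$ that you do obtain correctly from (ii) (via constant sequences and Hausdorffness of $u\tau$) is strictly weaker than (i) --- that is precisely the gap separating \Cref{Rmetric} from \Cref{countably generated closed ideal} --- so it cannot serve as a stepping stone on its own.
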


\begin{proof}
It suffices to prove (ii)$\Rightarrow$(i):
  Fix $x\in X_+$; we will show that $x\wedge ne_n\goestau x$
 or, equivalently, $(x-ne_n)^+\goestau 0$ as a
  sequence of $n$. Fix $m$ and put $u_m=x\vee e_m$. Now, the ideal $I_{u_m}$ is lattice
  isomorphic (as a vector lattice) to some order dense and majorizing sublattice of $C(K_m)$ for some compact Hausdorff space
  $K_m$, with $u_m$ corresponding to $\one$. Since $x,e_m\in I_{u_m}$, we may
  consider $x$ and $e_m$ as elements of $C(K_m)$. Note that $x\vee e_m=\one$
  implies that $x$ and $e_m$ never vanish simultaneously.

  For each $n\in\mathbb N$, we define
  \begin{displaymath}
    F_n^m=\bigl\{t\in K_m : x(t)\ge ne_m(t)\bigl\}
    \text{ and }
    O_n^m=\bigl\{t\in K_m : x(t)>ne_m(t)\bigl\}.
  \end{displaymath}
  Clearly, $O_n^m\subseteq F_n^m$, $O_n^m$ is open, and $F_n^m$ is closed.

  \emph{Claim 1}: $F_{n+1}^m\subseteq O_n^m$. Indeed, let $t\in
  F_{n+1}^m$. Then $x(t)\ge (n+1)e_m(t)$. If $e_m(t)>0$ then $x(t)>ne_m(t)$,
  so that $t\in O_n^m$. If $e_m(t)=0$ then $x(t)>0$, hence $t\in O_n^m$.

  By Urysohn's Lemma, we find $z_n^{(m)}\in C(K_m)$ such that $0\le z_n^{(m)}\le x$,
  $z_n^{(m)}$ agrees with $x$ on $F_{n+1}^m$ and vanishes outside of $O_n^m$.

  \emph{Claim 2}: $n(z_n^{(m)}\wedge e_m)\le x$. Let $t\in K_m$. If $t\in O_n^m$
  then $n(z_n^{(m)}\wedge e_m)(t)\le ne_m(t)<x(t)$. If $t\notin O_n^m$ then
  $z_n^{(m)}(t)=0$, so that the inequality is satisfied trivially.

  \emph{Claim 3}: $\bigl(x-(n+1)e_m\bigr)^+\le z_n^{(m)}$. Again, let
  $t\in K_m$. If $t\in F_{n+1}^m$ then
  $\bigl(x-(n+1)e_m\bigr)^+\le x(t)=z_n^{(m)}(t)$. If $t\notin F_{n+1}^m$ then
  $x(t)<(n+1)e_m(t)$, so that $\bigl(x-(n+1)e_m\bigr)^+(t)=0$ and the
  inequality is satisfied trivially.

   Denote the vector $\bigl(x-(n+1)e_{n+1}\bigr)^+$ in $X$ by $y_n$. We claim that for each $k$, $y_n\wedge e_k\goestau 0$ in $X$ as a sequence in $n$. Fix $k$ and choose $n\geq k$ arbitrarily.
   Then the following holds in $C(K_{n+1})$:
   $$y_n\wedge e_{n+1}\leq z_n^{(n+1)}\wedge e_{n+1}\leq \tfrac{1}{n}x.$$ In particular, $y_n\wedge e_{n+1}\leq \frac{1}{n}x$ holds in $C(K_{n+1})$ and hence in $X$ since both elements lie in $X$. Recalling that $(e_k)$ is increasing in $X$, we conclude that
   $0 \leq y_n\wedge e_k \leq \frac{1}{n}x$ holds in $X$.
   Since $\tau$ is locally solid, this implies that $y_n\wedge e_k\goestau 0$ for each $k$. The assumption yields that $y_n\goesutau 0$. Since $0\leq y_n\leq x$ we conclude $y_n\goestau 0.$
\end{proof}

\section{Metrizability of unbounded topologies}\label{section: metrizability}

As \Cref{submetrizability} shows, the notion of Riesz submetrizability passes nicely from $\tau$ to $u\tau$ in both directions.
However, the situation with metrizability is not as clean as the following example illustrates.

\begin{example}\label{notmet}
If $X$ is a Banach lattice
then by \cite[Theorem 3.2]{KMT} the
unbounded norm topology is metrizable iff $X$ has a quasi-interior point. Therefore, it should be clear that $\tau$ being metrizable does not guarantee that $u\tau$ is metrizable.

We now provide an example of a nonmetrizable locally solid vector lattice $(X,\tau)$ with a quasi-interior point such that $u\tau$ is  metrizable.

Let $X=L_2:=L_2[0,1]$. Since $X$ is an order continuous Banach lattice,
by \cite[Example 5.6]{me} the unbounded norm topology and unbounded absolute weak topology on $X$ agree. They are metrizable because $X$ has a quasi-interior point. Suppose that the absolute weak topology on $X$ is metrizable. Then by \cite[Theorem 5.6]{AB03}, $L_2$ admits a countable majorizing subset $A=\{f_n\}$. By definition, this means that $I(A)$ is majorizing in $X$, so that $I(A)=X$. Define  $f=\sum_{n=1}^{\infty}\frac{1}{2^n}\frac{|f_n|}{1+\|f_n\|}$. Then $I_f=I(A)=X$, so that $L_2$ has a strong unit. This is a contradiction.
\end{example}
In this section we consider metrizability of the unbounded topology. As was previously mentioned, if  $X$ is a Banach lattice, the unbounded norm topology is metrizable iff $X$ has a quasi-interior point. One direction was extended in \cite{DEM utau} while the complete characterization was obtained in \cite[Proposition 4]{DEM umtau} only for the case of complete metrizable locally convex-solid vector lattices.
In \Cref{metrizability qip} we will provide several improvements to the latter result. We will drop the completeness and local convexity assumptions on $\tau$, replace the existence of a countable topological orthogonal system  with the weaker requirement of a sequence which generates a $\tau$-dense ideal, and prove that metrizability of $u\tau$ is further equivalent to $\widehat X$ possessing a quasi-interior point.

Recall that, by \Cref{countably generated closed ideal}, a necessary condition for $u\tau$ to be metrizable is the existence of an at most countable set $A\subseteq X_+$ with $\overline{I(A)}^\tau=X$. The next example shows that this condition is not sufficient.

\begin{example}
Let $X=\mathbb{R}^J$ where $J$ is an uncountable set. Equipped with the product topology, $\tau$, and point-wise ordering, $X$ is a Hausdorff locally solid vector lattice with the Lebesgue property. It is a standard fact of topology that $(X,\tau)$ is not metrizable. Since the unbounded topology of a product is the product of the unbounded topologies by \cite[Theorem 3.1]{me}, we have $u\tau=\tau$, so that $u\tau$ is not metrizable. Notice that the function $\one \in \mathbb{R}^J$ is a quasi-interior point of $X$ since $\tau$ is Lebesgue and $\one$ is, clearly, a weak unit.
\end{example}

When $\tau$ is metrizable, the following theorem provides the complete answer on metrizability of $u\tau$.

\begin{theorem}\label{metrizability qip}
For a metrizable locally solid vector lattice $(X,\tau)$ the following statements are equivalent:
\begin{enumerate}
  \item There is an at most countable set $A$ in $X$ such that $\overline{I(A)}^\tau=X$;
  \item $u\tau$ is metrizable;
  \item $u\widehat{\tau}$ is metrizable;
  \item The topological completion $\widehat X$ contains a quasi-interior point.
\end{enumerate}
\end{theorem}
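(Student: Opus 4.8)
The plan is to run the cycle $(i)\Rightarrow(ii)\Rightarrow(i)$ together with $(i)\Leftrightarrow(iv)$ and $(iii)\Leftrightarrow(iv)$, moving freely between $(X,\tau)$ and its topological completion $(\widehat X,\widehat\tau)$. Recall from the preliminaries that $(\widehat X,\widehat\tau)$ is a \emph{complete} Hausdorff locally solid vector lattice with $\widehat X_+=\overline{X_+}^{\widehat\tau}$, that $X$ is $\widehat\tau$-dense in $\widehat X$, and that $\widehat\tau$ is metrizable because $\tau$ is. Two constructions will carry the argument. The first is a countable base for $u_{I(A)}\tau$: if a countable $A$ generates a $\tau$-dense ideal then $u\tau=u_{I(A)}\tau$ by \Cref{closure of ideals give same utau}, and, after replacing $A$ by an increasing positive sequence $(u_n)$ with $I(\{u_n\})=I(A)$ as in \Cref{COB Trick}, I would check that $\{U_{j,u_n}:j,n\in\NN\}$ is a base at zero for $u_{I(A)}\tau$ whenever $\{U_j\}$ is a countable solid base for $\tau$ — given $a\in I(A)_+$, pick $n$ and $\lambda\ge1$ with $a\le\lambda u_n$, choose $j'$ with $\lambda U_{j'}\subseteq U_j$ (continuity of scalar multiplication), and use $|x|\wedge a\le\lambda(|x|\wedge u_n)$ together with solidity to get $U_{j',u_n}\subseteq U_{j,a}$. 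Since $u\tau$ is Hausdorff, a countable base forces it to be metrizable. The second construction turns a countable ``dense ideal generator'' into a single quasi-interior point in a \emph{complete} metrizable locally solid vector lattice $Y$: starting from an increasing $(u_n)\subseteq Y_+$ with $\overline{I(\{u_n\})}=Y$, choose a solid base $\{U_j\}$ with $U_{j+1}+U_{j+1}\subseteq U_j$ and scalars $\lambda_n>0$ with $\lambda_n u_n\in U_{n+1}$; the partial sums of $\sum_n\lambda_n u_n$ are Cauchy, so the series converges to some $e\in Y_+$ with $e\ge\lambda_n u_n$ for all $n$, whence $I(\{u_n\})\subseteq I_e$ and $e$ is a quasi-interior point of $Y$.

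With these in hand I would assemble the theorem as follows. $(i)\Rightarrow(ii)$ is the first construction, and $(ii)\Rightarrow(i)$ is precisely \Cref{countably generated closed ideal} (which does not even require $\tau$ to be metrizable). For $(i)\Rightarrow(iv)$: take $(u_n)\subseteq X_+$ as above; since $X$ is $\widehat\tau$-dense and $\overline{I_X(\{u_n\})}^{\tau}=X$, the $\widehat\tau$-closure of $I_X(\{u_n\})$ contains $X$ and hence all of $\widehat X$, so $I_{\widehat X}(\{u_n\})$ is $\widehat\tau$-dense in $\widehat X$, and the second construction in $Y=\widehat X$ produces a quasi-interior point of $\widehat X$. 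For $(iv)\Rightarrow(i)$: given a quasi-interior point $e$ of $\widehat X$, use $e\in\overline{X_+}^{\widehat\tau}$ and the metrizability of $\widehat\tau$ to pick $(x_k)\subseteq X_+$ with $x_k\to e$ in $\widehat\tau$; then $\overline{I_X(\{x_k\})}^{\widehat\tau}$ is a $\widehat\tau$-closed ideal of $\widehat X$ containing $e$, hence containing $\overline{I_e}^{\widehat\tau}=\widehat X$, so it equals $\widehat X$, and therefore $\overline{I_X(\{x_k\})}^{\tau}=\overline{I_X(\{x_k\})}^{\widehat\tau}\cap X=X$, which is $(i)$ with $A=\{x_k\}$.

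For $(iii)\Leftrightarrow(iv)$ I would apply the same two constructions to $(\widehat X,\widehat\tau)$. If $\widehat X$ has a quasi-interior point $e$, then $\{e\}$ generates a $\widehat\tau$-dense ideal of $\widehat X$, and the first construction (applied to the metrizable space $\widehat X$) shows that $u\widehat\tau$ is metrizable, giving $(iv)\Rightarrow(iii)$. Conversely, if $u\widehat\tau$ is metrizable then \Cref{countably generated closed ideal} applied to $\widehat X$ yields a sequence in $\widehat X_+$ generating a $\widehat\tau$-dense ideal, and the second construction in $Y=\widehat X$ upgrades it to a quasi-interior point of $\widehat X$, giving $(iii)\Rightarrow(iv)$.

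The main obstacle I anticipate is the equivalence $(ii)\Leftrightarrow(iii)$, namely that metrizability of $u\tau$ and of $u\widehat\tau$ coincide. There is no direct ``completion'' relationship between $u\tau$ and $u\widehat\tau$ — the completion of $(X,u\tau)$ need not be $(\widehat X,u\widehat\tau)$ — so the equivalence has to be routed through conditions $(i)$ and $(iv)$, and the delicate step is transferring the ``countable $\tau$-dense ideal generator'' condition back down from $\widehat X$ to $X$. This is exactly why $(iv)$ is phrased in terms of $\widehat X$ and not $X$, and why passing through a single quasi-interior point plus the closed-ideal argument in $(iv)\Rightarrow(i)$ (rather than a naive element-by-element approximation of a whole countable set) is the efficient route. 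Completeness of $\widehat X$ enters only in the series construction, where the limit of $\sum\lambda_n u_n$ need not exist in $X$ itself.
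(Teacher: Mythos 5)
Your proposal is correct, and its two engines --- the countable base $\{U_{j,u_n}\}$ for $u_{I(A)}\tau=u\tau$ and the convergent series $\sum_n\lambda_nu_n$ in the complete space $\widehat X$ --- are exactly the paper's. The only real divergence is in how you wire the implications involving the completion. The paper gets (iii)$\Rightarrow$(ii) in one line from the restriction identity $(u\widehat\tau)|_X=u\tau$ (citing \cite[Lemma 3.5]{me}) and then descends to (i) via \Cref{countably generated closed ideal}, whereas you deliberately avoid that identity and instead close the loop with a direct (iv)$\Rightarrow$(i): approximate the quasi-interior point $e\in\widehat X_+=\overline{X_+}^{\widehat\tau}$ by a sequence $(x_k)\subseteq X_+$ and observe that $\overline{I_X(\{x_k\})}^{\widehat\tau}$ is a closed ideal of $\widehat X$ containing $e$, hence containing $\overline{I_e}^{\widehat\tau}=\widehat X$, so that $\overline{I_X(\{x_k\})}^{\tau}=\overline{I_X(\{x_k\})}^{\widehat\tau}\cap X=X$. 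That argument is sound (the closure of an ideal of $X$ is a solid subspace, hence an ideal, of $\widehat X$) and is a nice self-contained substitute for the restriction lemma; the paper's route is shorter but leans on an external fact about how $u$ commutes with completion. Everything else --- the verification that $U_{j',u_n}\subseteq U_{j,a}$ via $|x|\wedge a\le\lambda(|x|\wedge u_n)$ and solidity, the Cauchy estimate for the partial sums, and the transfer of $\tau$-density of $I(A)$ up to $\widehat\tau$-density in $\widehat X$ --- matches the paper's proof.
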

\begin{proof}
Recall that $\tau$ is metrizable if and only if $\widehat{\tau}$ is metrizable.

(i)$\Leftrightarrow$(ii): Suppose $(u_n)$ is a positive increasing sequence such that $A=\{u_n\} \subseteq X_+$ satisfies $\overline{I(A)}^{\tau}=X$. Let $\{U_i\}$ be a countable basis at zero for $\tau$ consisting of solid sets. Since, in particular, $B(A)=X$, as in the proof of \Cref{Rmetric} the collection $\{U_{i,u_n}\}$ is a  base at zero for a metrizable locally solid topology $\sigma_1 \subseteq u\tau$. We claim that $\sigma_1=u\tau$. Indeed, by \cite[Proposition 9.5]{me} $u_{I(A)}\tau=u\tau$ and it is easy to see that $\sigma_1=u_{I(A)}\tau$. We already know (ii)$\Rightarrow$(i).

(ii)$\Leftrightarrow$(iii): Suppose $u\tau$ is metrizable. It follows that there is an at most countable set $A$ in $X$ such that $\overline{I(A)}^\tau=X.$ Since $X$ is $\widehat{\tau}$-dense in $(\widehat X,\widehat{\tau})$, $I(A)$ is $\widehat \tau$-dense in $\widehat X$. Hence, the ideal generated by $A$ in $\widehat X$ is also $\widehat{\tau}$-dense in $\widehat X$. This implies that $u\widehat{\tau}$ is metrizable by applying (i)$\Leftrightarrow$(ii) to $\widehat{\tau}$. Conversely, if $u\widehat{\tau}$ is metrizable then so is $u\tau$ since $(u\widehat{\tau})|_X=u\tau$ by \cite[Lemma 3.5]{me}.

(iii)$\Rightarrow$(iv): Since $u\widehat{\tau}$ is metrizable, there exists a sequence $(e_n)\subseteq \widehat{X}_+$ such that $\overline{I(\{e_n\})}^{\widehat{\tau}}=\widehat{X}$. Since $\widehat{\tau}$ is metrizable, there is a countable neighbourhood basis $\{V_n\}$ of zero in $\widehat{X}$ consisting of solid sets such that for each $n\in \mathbb N$ we have $V_{n+1}+V_{n+1}\subseteq V_n.$ For each $n\in \mathbb N$ pick $\lambda_n>0$ such that $\lambda_n e_n\in V_n.$
We claim that the series $\sum_{n=1}^\infty \lambda_n e_n$ converges in $\widehat{X}_+$.
To prove this, define $s_n=\sum_{k=1}^n \lambda_k e_k$ and pick a solid neighbourhood $V_0$ of zero in $\widehat{X}$. Find $n_0\in \mathbb{N}$ such that $V_{n_0}\subseteq V_0.$
Then for $m>n\geq n_0$ we have
$$s_m-s_n=\lambda_{n+1}e_{n+1}+\cdots+\lambda_m e_m \in V_n\subseteq V_{n_0}\subseteq V_0,$$
so that the partial sums $(s_n)$ of the series  $\sum_{n=1}^\infty \lambda_n e_n$ form a Cauchy sequence in $\widehat{X}$.
Since $(\widehat{X},\widehat{\tau})$ is complete and Hausdorff, the series converges to an element of $\widehat{X}_+$. It is clear that $\sum_{n=1}^\infty \lambda_n e_n$ is a quasi-interior point of $\widehat{X}$.

(iv)$\Rightarrow$(iii): Since we have established the implication (i)$\Rightarrow$(ii) for any metrizable locally solid vector lattice, we simply apply it to $(\widehat{X},\widehat{\tau})$.
\end{proof}

In the case when $(X,\tau)$ is complete, \Cref{metrizability qip} reduces to the previously obtained result for Banach lattices.

\begin{corollary}\label{qip = metrizable}
Let $(X,\tau)$ be a complete metrizable locally solid vector lattice. Then $u\tau$ is metrizable iff $X$ has a quasi-interior point.
\end{corollary}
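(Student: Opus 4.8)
The plan is to deduce this immediately from \Cref{metrizability qip}. The key observation is that when $(X,\tau)$ is complete, its topological completion $(\widehat X,\widehat\tau)$ may be identified with $(X,\tau)$ itself, so that conditions (ii) and (iv) of \Cref{metrizability qip} specialise to ``$u\tau$ is metrizable'' and ``$X$ has a quasi-interior point'' respectively. The equivalence of these two statements is then precisely the assertion of the corollary.

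Concretely, the first step is to record the identification $\widehat X=X$. Since $\tau$ is metrizable it is Hausdorff, and a complete Hausdorff topological vector space is its own topological completion (the completion being unique up to isomorphism); thus $\widehat\tau=\tau$. One should also remark that the cone on $\widehat X$, which in the preliminaries was defined as the $\widehat\tau$-closure of $X_+$, is here just $X_+$, because the positive cone of a Hausdorff locally solid vector lattice is closed. Hence the whole locally solid vector lattice structure of $\widehat X$ coincides with that of $X$, and in particular a positive vector is a quasi-interior point of $\widehat X$ if and only if it is a quasi-interior point of $X$.

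The second and final step is simply to invoke the equivalence (ii)$\Leftrightarrow$(iv) of \Cref{metrizability qip}, now read through the identification of the previous paragraph. I do not anticipate any genuine obstacle: all of the real content is contained in \Cref{metrizability qip}, and this corollary is merely its specialisation to the complete case, which in turn recovers the Banach lattice statement of \cite[Theorem 3.2]{KMT} (where $\tau$ is the norm topology, which is automatically complete and metrizable).
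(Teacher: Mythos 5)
Your proposal is correct and matches the paper's (implicit) argument exactly: the paper states this corollary without proof as the specialisation of \Cref{metrizability qip} to the complete case, where $\widehat X=X$ identifies condition (iv) with ``$X$ has a quasi-interior point.'' Your additional remark that $X_+$ is $\tau$-closed (so the cone of $\widehat X$ really is $X_+$) is a correct and worthwhile detail to make the identification of locally solid structures airtight.
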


By \Cref{c00} there is no reason to believe that $X$ has a quasi-interior point if $\tau$ and $u\tau$ are metrizable.

\begin{remark}
\Cref{notmet} shows that it can happen that $\tau$ is not metrizable even when $u\tau$ is metrizable and there is a countable set $A$ such that $\overline{I(A)}^{\tau}=X$.
\end{remark}
It so happens that (ii)$\Leftrightarrow$(iii) in \Cref{metrizability qip} remains valid even when $\tau$ is not metrizable. We prove this now:
\begin{proposition}
Let $(X,\tau)$ be a Hausdorff locally solid vector lattice. Then $u\tau$ is metrizable iff $u\widehat{\tau}$ is metrizable.
\end{proposition}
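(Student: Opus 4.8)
The plan is to treat the two implications separately. One direction is immediate: by \cite[Lemma 3.5]{me} we have $(u\widehat\tau)|_X=u\tau$, so if $u\widehat\tau$ is metrizable then $u\tau$, being the topology induced on the vector subspace $X$ by a metrizable topological vector space, is metrizable as well. The content is in the converse, and the guiding idea is to transport metrizability of $u\tau$ to $\widehat X$ directly along the density of $X$ in $\widehat X$, rather than through any statement about countable sets generating dense ideals.

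So suppose $u\tau$ is metrizable. I would first record the relevant facts about the completion. Since $(X,\tau)$ is Hausdorff, $(\widehat X,\widehat\tau)$ is a Hausdorff locally solid vector lattice, and therefore $u\widehat\tau$ is again Hausdorff (the unbounded topology $u\sigma$ is Hausdorff precisely when $\sigma$ is). Moreover $u\widehat\tau$ is coarser than $\widehat\tau$, because $V\subseteq V_u$ for every solid set $V$ and every $u\in\widehat X_+$, where $V_u:=\{x\in\widehat X:|x|\wedge u\in V\}$; consequently, as $X$ is $\widehat\tau$-dense in $\widehat X$, it is also $u\widehat\tau$-dense in $\widehat X$. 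Combining this with $(u\widehat\tau)|_X=u\tau$, we see that $(X,u\tau)$ is a dense vector subspace, carrying the subspace topology, of the Hausdorff topological vector space $(\widehat X,u\widehat\tau)$, and it is metrizable by hypothesis. The proof then reduces to the purely topological fact that a Hausdorff topological vector space with a dense metrizable vector subspace is itself metrizable, applied with $Y=(\widehat X,u\widehat\tau)$ and $Z=(X,u\tau)$.

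To prove that topological fact directly, fix a countable base $\{W_k\}_{k\in\mathbb{N}}$ at zero for $Z$ consisting of sets open in $Z$, and write $W_k=O_k\cap Z$ with $O_k$ open in $Y$. Since $Z$ is dense in $Y$ and $O_k$ is open, $O_k\cap Z$ is dense in $O_k$, so the $Y$-closure $V_k:=\overline{W_k}^{\,Y}$ contains the open set $O_k\ni 0$ and is thus a neighbourhood of zero in $Y$. The family $\{V_k\}$ is a base at zero for $Y$: given a neighbourhood $N$ of zero in $Y$, choose, as one may in any topological vector space, a neighbourhood $N'$ of zero with $\overline{N'}^{\,Y}\subseteq N$; then $N'\cap Z$ is a neighbourhood of zero in $Z$, so $W_k\subseteq N'\cap Z$ for some $k$, and hence $V_k=\overline{W_k}^{\,Y}\subseteq\overline{N'}^{\,Y}\subseteq N$. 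Thus $Y$ is first countable, and being Hausdorff it is metrizable by the standard criterion recalled in the introduction.

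I do not anticipate a genuine obstacle once this reduction is in place; the one thing to get right is to resist the tempting route of invoking \Cref{countably generated closed ideal} to produce a countable set $\{e_n\}\subseteq X_+$ generating a $\widehat\tau$-dense ideal in $\widehat X$ and then appealing to \Cref{metrizability qip}. That argument breaks down because \Cref{metrizability qip} needs $\widehat\tau$ to be metrizable; indeed, as the example $X=\mathbb{R}^J$ with $J$ uncountable shows, a countable set generating a $\widehat\tau$-dense ideal does not make $u\widehat\tau$ metrizable when $\widehat\tau$ itself is not. The density-transfer argument above stays entirely within the world of $u$-topologies and so sidesteps this.
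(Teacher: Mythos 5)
Your proof is correct, and it takes a genuinely different route from the one in the paper. The paper proves the hard direction by hand: starting from a countable base $\{V_{U_n,u_n}\}$ for $u\tau$ (with $U_n$ $\tau$-closed and solid), it shrinks each $U_n$ to $U_n'$ with $U_n'+U_n'\subseteq U_n$ and verifies, via the density of $X$ in $\widehat X$ and lattice estimates such as $|\widehat y|\wedge\widehat x\leq|\widehat y|\wedge|\widehat x-x|+|\widehat y|\wedge x$, that the sets $V_{\overline{U_n'}^{\widehat\tau},u_n}$ form a base for $u\widehat\tau$ --- note in particular that the witnessing vectors $u_n$ stay in $X_+$ and need not be replaced by elements of $\widehat X_+$. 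You instead observe that $(X,u\tau)$ is a dense metrizable subspace of the Hausdorff space $(\widehat X,u\widehat\tau)$ (density because $u\widehat\tau\subseteq\widehat\tau$, the subspace identification by \cite[Lemma 3.5]{me}) and then invoke the purely topological-group fact that first countability at zero passes from a dense subgroup to the ambient group; your verification of that fact (closures of the $W_k$ contain the open sets $O_k$, and closed neighbourhoods of zero form a base in any topological vector space) is sound. Your route is shorter, avoids all lattice computations, and isolates the real mechanism; what it gives up is the explicit description of a base for $u\widehat\tau$ in terms of the original solid neighbourhoods and vectors of $X$, which the paper's argument produces as a by-product. Your closing caution about not detouring through \Cref{countably generated closed ideal} and \Cref{metrizability qip} is also well taken, since the latter genuinely requires $\tau$ metrizable.
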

\begin{proof}
If $u\widehat{\tau}$ is metrizable, then so is $u\tau$; this follows since $u\tau=(u\widehat{\tau})|_X$. Suppose $u\tau$ is metrizable and let $\{V_{U_n,u_n}\}$ be a countable basis for $u\tau$ where $U_n$ is a solid $\tau$-closed neighbourhood at zero in $X$, $u_n\in X_+$, and $V_{U_n,u_n}:=\{x\in X: |x|\wedge u_n\in U_n\}$ is defined for notational convenience. Find a $\tau$-closed solid neighbourhood $U_n'$ of zero for $\tau$ with $U_n'+U_n'\subseteq U_n$. We claim that $\{V_{\overline{U_n'}^{\widehat{\tau}}, u_n}\}$ is a basis for $u\widehat{\tau}$ where $V_{\overline{U_n'}^{\widehat{\tau}}, u_n}:=\{\widehat{x}\in \widehat{X}:|\widehat{x}|\wedge u_n \in \overline{U_n'}^{\widehat{\tau}}\}$.

Let $V_{\overline{Z}^{\widehat{\tau}}, \widehat{x}}$ be an arbitrary base neighbourhood of zero for $u\widehat{\tau}$. Here, $Z$ is a solid neighbourhood of zero for $\tau$ and $\widehat{x}\in \widehat{X}_+$. Find $U$ a solid neighbourhood of zero for $\tau$ with $U+U\subseteq Z$. Since $X$ is $\widehat{\tau}$-dense in $\widehat{X}$, there exists $x\in X_+$ with $|\widehat{x}-x|\in \overline{U}^{\widehat{\tau}}$. Find $W$ a solid neighbourhood of zero for $\tau$ with $W+W\subseteq U$. There exists $n$ such that $V_{U_n, u_n}\subseteq V_{W,x}$.

Let $\widehat{y}\in V_{\overline{U_n'}^{\widehat{\tau}}, u_n}$; we will show that $\widehat{y}\in V_{\overline{Z}^{\widehat{\tau}}, \widehat{x}}$. Find $y\in X$ with $|\widehat{y}-y|\in \overline{U_n'}^{\widehat{\tau}}\cap \overline{W}^{\widehat{\tau}}$. Then $$|y|\wedge u_n\leq |\widehat{y}-y|\wedge u_n+|\widehat{y}|\wedge u_n \in \overline{U_n'}^{\widehat{\tau}}+\overline{U_n'}^{\widehat{\tau}}\subseteq \overline{U_n}^{\widehat{\tau}}.$$ Since $U_n$ is $\tau$-closed in $X$, $|y|\wedge u_n\in \overline{U_n}^{\widehat{\tau}}\cap X=U_n.$ Therefore, $y\in V_{U_n,u_n} \subseteq V_{W,x}$. This implies that $|y|\wedge x\in W$. Hence $$|\widehat{y}|\wedge x\leq |\widehat{y}-y|\wedge x+|y|\wedge x\in \overline{W}^{\widehat{\tau}}+W\subseteq \overline{U}^{\widehat{\tau}}.$$ Combining gives, $$|\widehat{y}|\wedge \widehat{x}\leq |\widehat{y}|\wedge |\widehat{x}-x|+|\widehat{y}|\wedge x\in \overline{U}^{\widehat{\tau}}+\overline{U}^{\widehat{\tau}} \subseteq \overline{Z}^{\widehat{\tau}}.$$
\end{proof}

We next extend our results on metrizability to $u_A\tau$. One should compare the next result with \cite[Theorem 3.3]{KLT}:

\begin{proposition}\label{metrizability u_Xtau}
Let $A$ be a $\tau$-closed ideal of a metrizable locally solid vector lattice $(X,\tau)$. TFAE:
\begin{enumerate}
\item $u_A\tau$ on $X$ is metrizable.
\item $u(\tau|_A)$ on $A$ is metrizable and $A$ is order dense in $X$.
\item $A$ contains an at most countable set $B$ such that $\overline{I(B)}^\tau=A$ and $B$ is a countable order basis for $X$.
\end{enumerate}
\end{proposition}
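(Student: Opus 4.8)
The plan is to prove the cycle (i)$\Rightarrow$(iii)$\Rightarrow$(ii)$\Rightarrow$(i), leaning heavily on the machinery already developed. First I would observe that since $A$ is a $\tau$-closed ideal, the restriction $\tau|_A$ is itself a metrizable locally solid topology on the vector lattice $A$, and moreover $u_A\tau$ restricted to $A$ agrees with $u(\tau|_A)$; this is the standard compatibility of unbounded topologies with ideals and should be either cited from \cite{me} or checked directly from the definitions of the basic neighbourhoods $U_{i,a}$.

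For (i)$\Rightarrow$(iii): if $u_A\tau$ is metrizable on $X$, then in particular it is submetrizable, so by \Cref{Rmetric}(ii) there is an at most countable set $B\subseteq A$ which is a countable order basis for $X$. It remains to arrange $\overline{I(B)}^\tau=A$. The natural approach is to mimic the proof of \Cref{countably generated closed ideal}: take a metric $d$ for $u_A\tau$ and, for each $n$, choose a basic neighbourhood $V_{i_n,a_n}\subseteq B_{1/n}$ with $0\le a_n\in A$; then $\{a_n\}$ satisfies, for nets in $X_+$, $x_\alpha\xrightarrow{u_A\tau}0 \Leftrightarrow x_\alpha\wedge a_n\xrightarrow{\tau}0$ for all $n$, and combining the two countable families gives a countable $B\subseteq A$ that is simultaneously an order basis for $X$ and generates an ideal satisfying $\overline{I(B)}^{\tau|_A}=A$, i.e. $\overline{I(B)}^\tau=A$ since $A$ is $\tau$-closed. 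Here I would invoke \Cref{Testing} (applied inside $A$ with $\tau|_A$) to turn the net condition into the density statement $\overline{I(B)}^{\tau|_A}=A$, using that $u_{I(B)}(\tau|_A)=u(\tau|_A)=u_A\tau$ on $A$.

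For (iii)$\Rightarrow$(ii): given such a $B$, density $\overline{I(B)}^\tau=A$ means $\overline{I(B)}^{\tau|_A}=A$ (using $A$ $\tau$-closed), so by \Cref{Testing} inside $A$ we get $u_{I(B)}(\tau|_A)=u(\tau|_A)$; and since $B$ is an at most countable set with $\overline{I(B)}^{\tau|_A}=A$ and $\tau|_A$ is metrizable, \Cref{metrizability qip}(i)$\Rightarrow$(ii) applied to $(A,\tau|_A)$ gives that $u(\tau|_A)$ is metrizable. That $A$ is order dense in $X$ follows from $B\subseteq A$ being a countable order basis, i.e. $B(A)\supseteq B(B)=X$, so $A$ is a band and in particular order dense. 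For (ii)$\Rightarrow$(i): the same construction as in \Cref{Rmetric}(i) / \Cref{metrizability qip}(i)$\Rightarrow$(ii) works. Pick a countable increasing order basis $(u_n)$ out of the countable set witnessing metrizability of $u(\tau|_A)$ — or rather, use \Cref{countably generated closed ideal} on $(A,\tau|_A)$ to get $e_n\in A_+$ with $\overline{I(\{e_n\})}^{\tau|_A}=A$, then adjoin a countable order basis of $X$ sitting inside $A$ (which exists since $A$ is order dense: take $B$ order dense and countable? — more carefully, use that order density of $A$ plus metrizability lets us produce a countable $A$-valued order basis of $X$, or simply note $A$ $\tau$-closed and order dense forces... ) — and then the collection $\{U_{i,u_n}\}$ over a countable solid base $\{U_i\}$ of $\tau$ defines a metrizable locally solid topology $\subseteq u_A\tau$, which coincides with $u_A\tau$ because $u_{I(B)}\tau = u_A\tau$ by \Cref{closure of ideals give same utau} once $\overline{I(B)}^\tau = A$ and hence $u_{I(B)}\tau = u_A\tau$. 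The main obstacle, and the step I would spend the most care on, is the subtle interplay in (ii)$\Rightarrow$(i): metrizability of $u(\tau|_A)$ on the space $A$ only directly produces a countable order basis \emph{of $A$}, and one must upgrade this — using order density of $A$ in $X$ — to a countable set that is an order basis of the whole space $X$ while still generating a $\tau$-dense ideal of $A$; I expect this to require combining \Cref{countably generated closed ideal} (for the density) with a separate argument that order density of $A$ together with a countable order basis of $A$ yields a countable order basis of $X$ (which is immediate once we note a band generated in $A$ is a band in $X$ because order dense ideals are band-preserving in the relevant sense).
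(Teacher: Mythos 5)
Your proof is correct and rests on exactly the same ingredients as the paper's --- the identity $(u_A\tau)|_A=u(\tau|_A)$, \Cref{countably generated closed ideal}, the metric-ball construction of \Cref{Rmetric}, and \Cref{closure of ideals give same utau} --- but you run the cycle in the opposite direction, (i)$\Rightarrow$(iii)$\Rightarrow$(ii)$\Rightarrow$(i), where the paper does (i)$\Rightarrow$(ii)$\Rightarrow$(iii)$\Rightarrow$(i). The paper's ordering is more economical: its (i)$\Rightarrow$(ii) is a one-line restriction/Hausdorff observation, and all the real work (extracting the countable set via \Cref{countably generated closed ideal} applied to $(A,\tau|_A)$, then upgrading it to an order basis of $X$ via order density of $A$) is concentrated in (ii)$\Rightarrow$(iii). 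Your ordering forces that work to be done twice: once in (i)$\Rightarrow$(iii), where you redo the metric-ball argument directly for $u_A\tau$ instead of passing through the restriction to $A$, and again inside (ii)$\Rightarrow$(i), which is why your last step reads as a concatenation of the paper's (ii)$\Rightarrow$(iii) and (iii)$\Rightarrow$(i) and is where your exposition trails off. Two small repairs: in (iii)$\Rightarrow$(ii), $B(A)=X$ does not make $A$ a band --- it gives $A^{d}=\{0\}$, i.e.\ $A$ is order dense, which is all you need; and the ``upgrade'' you flag as the main obstacle is resolved exactly as in the paper's (ii)$\Rightarrow$(iii): if $0\neq x\perp\{e_n\}$ in $X$, order density of $A$ yields $a\in A$ with $0<a\leq|x|$, and then $a\perp I(\{e_n\})$ together with $a\in\overline{I(\{e_n\})}^{\tau|_A}=A$ and continuity of the lattice operations forces $a=0$, a contradiction. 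With those points tightened, your argument is a valid, if slightly redundant, rearrangement of the paper's proof.
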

\begin{proof}
(i)$\Rightarrow$(ii): If $u_A\tau$ is metrizable on $X$, then $u(\tau|_A)$ being the relative topology of $u_A\tau$ is metrizable on $A$. Since $u_A\tau$ is Hausdorff, $A$ is order dense in $X$.

(ii)$\Rightarrow$(iii): Since $u(\tau|_A)$ is metrizable, by \Cref{countably generated closed ideal} there is an at most countable set $B\subseteq A_+$ with $\overline{I(B)}^{\tau|_A}=A.$ Since $A$ is closed, this implies that $\overline{I(B)}^{\tau}=A.$ Pick $x\in X_+$ with $x\perp B$. If $x$ is nonzero, there is $a\in A_+$ with $0<a\leq x.$ Since $a\perp B$ and $\overline{I(B)}^{\tau|_A}=A$, we have $a=0$. This contradiction shows $x=0$, so that by \cite[Lemma 2.2]{LC} we conclude that $B$ is a countable order basis for $X$.

(iii)$\Rightarrow$(i): Let $B:=\{b_n\}\subseteq A_+$ be a countable order basis for $X$ such that $\overline{I(B)}^\tau=A$. As always, we assume $(b_n)$ is a positive increasing sequence. Following \Cref{Rmetric}, the sets $U_{i,b_n}:=\{x\in X: |x|\wedge b_n \in U_i\}$, where $\{U_i\}$ is a countable solid base at zero for $\tau$, defines a metrizable locally solid topology $\tau_1$ on $X$. Note that $$x_{\alpha}\xrightarrow{\tau_1}0\Leftrightarrow \forall n \ |x_{\alpha}|\wedge b_n\xrightarrow{\tau}0 \Leftrightarrow x_{\alpha}\xrightarrow{u_{I(B)}\tau}0\Leftrightarrow x_{\alpha}\xrightarrow{u_A\tau}0,$$ so that $u_A\tau$ is metrizable.
\end{proof}
\begin{remark}
The assumption that $A$ is $\tau$-closed is for convenience since $u_A\tau=u_{\overline{A}^\tau}\tau$.
\end{remark}

It is well known that all Hausdorff Lebesgue topologies induce the same topology on order intervals, see, for example, \cite[Theorem 4.22]{AB03}. Since minimal topologies are Hausdorff and Lebesgue, the ``local" properties of minimal topologies are well studied. For example, by \cite[Theorem 4.26]{AB03}, if $\tau$ is a Hausdorff Lebesgue topology then $\tau$ induces a metrizable topology on the order intervals of $X$ if and only if $X$ has the countable sup property. We conclude this section with a complete characterization of ``global" metrizability of minimal topologies.

Let $X$ be a vector lattice admitting a minimal topology $\tau$. In \cite{Conradie05} it was shown that, if $X$ has a weak unit, $\tau$ is metrizable iff $X$ has the countable sup property. \Cref{cool thm} removes the weak unit assumption and  characterizes metrizability of $\tau$ in terms of the vector lattice structure of $X$. Recall that $C_{\tau}$, the carrier of the locally solid topology $\tau$, is defined in \cite[Definition 4.15]{AB03}.
\begin{theorem}\label{cool thm}
Suppose that $X$ is a vector lattice admitting a minimal topology $\tau$. Then $\tau$ is metrizable if and only if $X$ has the countable sup property and a countable order basis.
\end{theorem}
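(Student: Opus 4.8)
The plan is to prove the two implications separately; the forward direction is quick, while the reverse direction requires constructing a countable base of neighbourhoods at zero for $\tau$. Suppose first that $\tau$ is metrizable. Since $\tau$ is minimal, \Cref{a.e. implies measure} shows $\tau$ is Hausdorff, Lebesgue, and unbounded. A metrizable locally solid topology is trivially Riesz submetrizable, so \Cref{unmet} gives that $X$ has a countable order basis. Moreover, a metrizable topology restricts to a metrizable one on every order interval, so the equivalence in \cite[Theorem 4.26]{AB03}, applicable since $\tau$ is Hausdorff and Lebesgue, forces $X$ to have the countable sup property.

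For the reverse direction, assume $X$ has the countable sup property and a countable order basis. Using \Cref{COB Trick} I fix a positive increasing sequence $(u_n)$ with $B(\{u_n\})=X$ and set $v_m:=mu_m$. Then $(v_m)$ is positive and increasing, $I(\{v_m\})=I(\{u_n\})$, $B(\{v_m\})=X$, and, crucially, every $a\in I(\{v_m\})_+$ is dominated by a single $v_m$ (if $a\le cu_{m_0}$, choose an integer $k\ge c$ and note $a\le ku_{m_0}\le v_{km_0}$). Since $B(\{v_m\})=X$, the ideal $I(\{v_m\})$ is order dense in $X$, hence $\tau$-dense because $\tau$ is Hausdorff and Lebesgue (\cite[Theorem 3.8]{AB03}); thus $\overline{I(\{v_m\})}^\tau=X$. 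By \Cref{Testing} this yields $u_{I(\{v_m\})}\tau=u\tau$, and as minimal topologies are unbounded, $\tau=u\tau=u_{I(\{v_m\})}\tau$. Now, for each $m$, the countable sup property and the Hausdorff Lebesgue property of $\tau$ give, via \cite[Theorem 4.26]{AB03}, that the relative topology of $\tau$ on the order interval $[-v_m,v_m]$ is metrizable; pick solid $\tau$-neighbourhoods $W^m_k$ of zero ($k\in\mathbb N$) whose traces on $[-v_m,v_m]$ form a base at zero for that relative topology (possible since $\tau$ is locally solid). I then claim that the countable family $\{N_{k,m}:k,m\in\mathbb N\}$, with $N_{k,m}:=\{x\in X:|x|\wedge v_m\in W^m_k\}$, is a base of neighbourhoods at zero for $\tau$. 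Each $N_{k,m}$ is a $u\tau$-neighbourhood of zero, hence a $\tau$-neighbourhood since $\tau=u\tau$. Conversely, let $N$ be a $\tau$-neighbourhood of zero; writing $\tau=u_{I(\{v_m\})}\tau$ produces a solid $\tau$-neighbourhood $U$ of zero and some $a\in I(\{v_m\})_+$ with $\{x\in X:|x|\wedge a\in U\}\subseteq N$. Choose $m$ with $a\le v_m$ and then $k$ with $W^m_k\cap[-v_m,v_m]\subseteq U$. If $x\in N_{k,m}$, then $|x|\wedge v_m\in W^m_k$ and $|x|\wedge v_m\in[0,v_m]\subseteq[-v_m,v_m]$, so $|x|\wedge v_m\in U$; since $a\le v_m$ and $U$ is solid, $|x|\wedge a\in U$, so $x\in N$. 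Hence $N_{k,m}\subseteq N$, and, being Hausdorff and first countable, $\tau$ is metrizable.

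The step I expect to be the main obstacle is the passage, in the reverse direction, from ``local'' metrizability to ``global'' metrizability: \cite[Theorem 4.26]{AB03} hands us metrizability on each order interval $[-v_m,v_m]$ for free, and the real content is to glue countably many of these local neighbourhood bases into an honest base at zero for $\tau$. The device that makes this work is the identity $\tau=u_{I(\{v_m\})}\tau$ — a minimal topology coincides with the unbounded topology generated by a countable order basis — which is precisely what lets a $\tau$-neighbourhood base be recovered from the $W^m_k$'s. A small but essential bookkeeping point is the rescaling of the order basis to $(mu_m)$, so that every element of $I(\{v_m\})$ lies below a single $v_m$ rather than merely a scalar multiple of one; without it, the choice of which order interval to use becomes more awkward.
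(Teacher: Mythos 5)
Your proof is correct, but the reverse direction follows a genuinely different route from the paper's. The paper builds, from the carrier identity $C_\tau=X$ and the normal-sequence construction of \cite[Theorem 4.17]{AB03}, a single coarser Hausdorff metrizable locally solid topology $\tau'$, and then invokes minimality in its raw form (no strictly coarser Hausdorff locally solid topology exists) to conclude $\tau=\tau'$. You instead use minimality only through its characterization as Hausdorff $+$ Lebesgue $+$ unbounded, and assemble an explicit countable base at zero: local metrizability on order intervals from \cite[Theorem 4.26]{AB03} supplies the sets $W^m_k$, and the identity $\tau=u_{I(\{v_m\})}\tau$ (via \Cref{Testing} and $\tau$-density of order dense ideals under the Lebesgue property) is what glues the local bases into a global one. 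The rescaling $v_m=mu_m$, which guarantees every element of the ideal sits below a single $v_m$, is exactly the bookkeeping needed to make the gluing work, and your verification that each $N_{k,m}$ is a $u\tau$-neighbourhood and that every $\tau$-neighbourhood contains some $N_{k,m}$ is sound. What the paper's argument buys is brevity — it delegates all the work to the carrier theorem and the minimality axiom; what yours buys is an explicit description of a neighbourhood base and a proof that isolates precisely which consequences of minimality are used (unboundedness and the Lebesgue property), at the cost of a longer verification. In the forward direction you reach the countable sup property through the order-interval criterion \cite[Theorem 4.26]{AB03} rather than the paper's direct appeal to \cite[Theorem 5.33]{AB03}; both are valid.
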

\begin{proof}
Recall that, being minimal, $\tau$ is unbounded, Hausdorff and Lebesgue by \cite[Theorem 6.4]{me}.

If $\tau$ is metrizable and Lebesgue then $X$ has the countable sup property by \cite[Theorem 5.33]{AB03}. Since $\tau$ is metrizable and  locally solid, $\tau$ is Riesz submetrizable. Since $\tau$ is unbounded and Riesz submetrizable, $X$ has a countable order basis by \Cref{unmet}.

Suppose $\tau$ is Hausdorff, unbounded and Lebesgue and that $X$ has the countable sup property and admits a countable order basis. By \cite[Theorem 4.17(b)]{AB03}, $C_{\tau}=X$. Let $\{u_n\}$ be a countable order basis of $X$. As in the proof of \cite[Theorem 4.17(a)]{AB03} there exists a normal sequence $\{U_n\}$ of solid $\tau$-neighbourhoods of zero such that $\{u_n\}\subseteq N^d$, where $N=\bigcap_{n=1}^{\infty}U_n$. Since $N^d$ is a band, $X=B(\{u_n\})\subseteq N^d$, so that $X=N^d$. The sequence $\{U_n\}$ defines a metrizable locally solid topology $\tau'$ on $X$ satisfying $\tau' \subseteq \tau$. Since $\tau$ is minimal, $\tau=\tau'$ so that $\tau$ is metrizable.
\end{proof}

 We remark that $C[0,1]$ satisfies the countable sup property, admits a metrizable locally solid topology, and has a countable order basis, but does not admit a minimal topology. Indeed, a vector lattice admits a minimal topology iff it admits a Hausdorff Lebesgue topology, and no Hausdorff Lebesgue topology exists on $C[0,1]$ by \cite[Example 3.2]{AB03}.
\\

Recall that \cite[Theorem 7.55]{AB03} states that if a laterally $\sigma$-complete vector lattice admits a metrizable locally solid topology $\tau$ then $\tau$ is the only Hausdorff locally solid topology on $X$ and $\tau$ is Lebesgue. Therefore, $\tau$ is the minimal topology. After reminding oneself that laterally complete vector lattices admit weak units by, say, \cite[Theorem 7.2]{AB03}, the next corollary is an immediate consequence of \cite[Corollary 5.3]{me}: 

\begin{corollary}
Suppose $X$ is a vector lattice that admits a minimal topology $\tau$. $\tau$ is complete and metrizable if and only if $X$ is universally complete and has the countable sup property. In this case, $\tau$ is the only Hausdorff locally solid topology on $X$.
\end{corollary}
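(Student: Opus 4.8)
The plan is to reduce the stated equivalence to \Cref{cool thm} (which handles metrizability of a minimal topology) together with \cite[Corollary 5.3]{me} (which, in the presence of a weak unit, equates completeness of a minimal topology with universal completeness of the underlying vector lattice), and then to read off the final uniqueness clause from \cite[Theorem 7.55]{AB03}. Throughout, I would keep in mind that a minimal topology is Hausdorff, Lebesgue and unbounded by \cite[Theorem 6.4]{me}, so in particular $u\tau=\tau$.

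For the forward implication, suppose $\tau$ is complete and metrizable. Since $\tau=u\tau$, the topology $u\tau$ is metrizable, so \Cref{qip = metrizable} supplies a quasi-interior point $e$ of $(X,\tau)$. Such an $e$ is automatically a weak unit: if $x\in X_+$ satisfies $x\wedge e=0$ then $x\wedge ne=0$ for every $n$, so $x=x-x\wedge ne\goestau 0$, and Hausdorffness forces $x=0$. By \Cref{cool thm}, metrizability of the minimal topology $\tau$ forces $X$ to have the countable sup property. Now \cite[Corollary 5.3]{me}, which applies because $X$ has a weak unit and a complete minimal topology, yields that $X$ is universally complete.

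For the converse, suppose $X$ is universally complete with the countable sup property. Then $X$ is laterally complete, hence admits a weak unit $e$ by \cite[Theorem 7.2]{AB03}, and $\{e\}$ is a countable order basis for $X$; combined with the countable sup property, \Cref{cool thm} shows that $\tau$ is metrizable, while completeness of $(X,\tau)$ follows from \cite[Corollary 5.3]{me} applied to the universally complete $X$ equipped with this weak unit. Finally, in either case $X$ is universally complete, hence laterally $\sigma$-complete, and $\tau$ is a metrizable locally solid topology, so \cite[Theorem 7.55]{AB03} guarantees that $\tau$ is the only Hausdorff locally solid topology on $X$. The one delicate point is the bookkeeping around \cite[Corollary 5.3]{me}: one must ensure a weak unit is in hand each time it is invoked — produced by \Cref{qip = metrizable} in the forward direction and by lateral completeness in the converse — and that the countable sup property, needed to run \Cref{cool thm}, travels with the hypotheses in the appropriate direction; everything else is routine assembly of results already in the paper.
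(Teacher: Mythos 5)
Your proof is correct and follows essentially the same route as the paper, which derives the corollary from \cite[Corollary 5.3]{me} together with \Cref{cool thm}, the existence of weak units in laterally complete lattices (\cite[Theorem 7.2]{AB03}), and \cite[Theorem 7.55]{AB03} for the uniqueness clause. Your extra care in producing a weak unit in the forward direction via \Cref{qip = metrizable} is a sensible way to fill in a step the paper leaves implicit.
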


\section{Locally bounded unbounded topologies}

In this section we present a theorem and examples regarding local boundedness of unbounded topologies. If $(X,\tau)$ is locally bounded and Hausdorff, then $(X,\tau)$ is metrizable. By \Cref{metrizability qip} we already know that metrizability of $u\tau$ is equivalent to the topological completion $\widehat X$ of $X$ having a quasi-interior point. When studying Hausdorff locally bounded unbounded topologies, it is strong units of the topological completion that are of interest.

\begin{theorem}\label{answer to LB}
Let $\tau$ be a Hausdorff locally solid topology on a vector lattice $X$. TFAE: 
\begin{enumerate}
\item $u\tau$ is locally bounded;
\item $u\tau$ has an order bounded neighbourhood of zero;
\item $\tau$ has an order bounded neighbourhood of zero;
\item $\widehat{X}$ has a strong unit and $\tau$ is metrizable;
\item $\widehat{\tau}$ coincides with the $\|\cdot\|_u$-topology, where $u$ is a strong unit of $\widehat{X}$, and $(\widehat{X},\|\cdot \|_u)$ is a Banach lattice which is lattice isometric to a $C(K)$-space;
\item $u\widehat{\tau}$ is locally bounded.
\end{enumerate}
In this case, $\tau=u\tau$ and $\widehat{\tau}=u\widehat{\tau}$.
\end{theorem}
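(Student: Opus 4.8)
The plan is to prove the cycle of implications $(i)\Rightarrow(ii)\Rightarrow(iii)\Rightarrow(iv)\Rightarrow(v)\Rightarrow(vi)\Rightarrow(i)$, together with the final assertion that $\tau=u\tau$ and $\widehat\tau=u\widehat\tau$. The key observation tying everything together is that a solid bounded neighbourhood of zero in a locally solid topology is ``essentially'' an order interval: more precisely, in a locally solid vector lattice a neighbourhood of zero is bounded iff it is topologically bounded by a single order interval, and conversely order intervals are $u\tau$-bounded because $u\tau$ is locally solid and $|x|\wedge u=|x|$ for $x\in[-u,u]$.

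First I would handle $(i)\Rightarrow(ii)$: if $u\tau$ is locally bounded, pick a bounded solid $u\tau$-neighbourhood $W$ of zero; it suffices to show $W$ is order bounded. This uses the general fact (true in any Hausdorff locally solid vector lattice) that a bounded solid set is order bounded --- the point being that if $W$ contains vectors of arbitrarily large disjoint ``pieces'' it cannot be absorbed by a basic $u\tau$-neighbourhood $U_{i,u}$. Concretely, if $W$ were not order bounded one produces a sequence witnessing failure of boundedness against some $U_{i,u}$. Then $(ii)\Rightarrow(iii)$ is the heart of the matter: if $V_{i,u}=\{x:|x|\wedge u\in U_i\}$ is order bounded, say contained in $[-w,w]$, I claim $U_i$ itself (or a smaller $\tau$-neighbourhood) is order bounded by $[-w,w]$ as well; indeed for $x\in U_i$ with $|x|\le$ some truncation, one leverages that $x\wedge u$-type elements sitting in $V_{i,u}$ force $x$ into $[-w,w]$ after replacing $u$ by a suitably large vector --- this is the step I expect to require the most care, and it is where I would invoke a $C(K)$-representation of the ideal $I_w$ (as set up in Section~3) to control truncations. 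The implication $(iii)\Rightarrow(iv)$ proceeds by: an order bounded $\tau$-neighbourhood of zero makes $\tau$ locally bounded, hence metrizable (and Hausdorff), so $\widehat\tau$ is metrizable and complete; moreover the closure in $\widehat X$ of an order interval $[-w,w]$ is the order interval $[-w,w]$ computed in $\widehat X$ (using that $\widehat\tau$-closures of solid sets are solid and the Lebesgue-type structure), and a complete metrizable locally solid space whose unit ball is an order interval has that interval's endpoint as a strong unit of $\widehat X$ --- here one notes $\widehat X$ with the $\|\cdot\|_w$-norm is a Banach lattice.

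For $(iv)\Rightarrow(v)$: given a strong unit $u$ of $\widehat X$ and $\widehat\tau$ metrizable, the $\|\cdot\|_u$-topology is a locally solid topology on $\widehat X$ that is coarser than or comparable to $\widehat\tau$; since $\widehat X$ is $\widehat\tau$-complete and $(\widehat X,\|\cdot\|_u)$ is uniformly complete (being complete in a stronger or equal topology), Kakutani's theorem (as recalled before Theorem~\ref{un topology by ideals}) identifies it with $C(K)$, and a uniqueness/comparison argument (two comparable complete metrizable locally solid topologies on the same space, one being an AM-space norm, must coincide --- via the open mapping theorem or \cite[Theorem 7.55]{AB03}-style rigidity) gives $\widehat\tau=\|\cdot\|_u$-topology. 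Then $(v)\Rightarrow(vi)$ is immediate: in $C(K)$ the constant function $\one$ is a strong unit, so $[-\one,\one]$ is a norm-neighbourhood of zero that is also $u\widehat\tau$-bounded (order intervals are always bounded for any locally solid topology, in particular for $u\widehat\tau$), hence $u\widehat\tau$ is locally bounded; in fact in a space with a strong unit $u$ one has $U_{i,u}=U_i$ so $u\widehat\tau=\widehat\tau$. Finally $(vi)\Rightarrow(i)$ follows since $u\tau=(u\widehat\tau)|_X$ by \cite[Lemma 3.5]{me}, and a restriction of a locally bounded topology is locally bounded. The closing statement $\tau=u\tau$ and $\widehat\tau=u\widehat\tau$ falls out of the proof: condition $(v)$ exhibits a strong unit $u$ of $\widehat X$, so $U_{i,u}=U_i$ gives $u\widehat\tau=\widehat\tau$ directly, and restricting to $X$ yields $u\tau=(u\widehat\tau)|_X=\widehat\tau|_X=\tau$.
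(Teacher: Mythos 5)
Your cycle of implications and the closing clause match the paper's overall structure, and your treatments of (iii)$\Rightarrow$(iv), (iv)$\Rightarrow$(v), (v)$\Rightarrow$(vi) and (vi)$\Rightarrow$(i) are essentially the arguments the paper gives. However, the first two links of the chain are where the content actually lives, and both are problematic. For (i)$\Rightarrow$(ii) you lean on ``the general fact (true in any Hausdorff locally solid vector lattice) that a bounded solid set is order bounded.'' That statement is false: the closed unit ball of $\ell_2$ is solid and norm bounded but not order bounded. The mechanism you need is not a general boundedness principle but a dichotomy special to sets of the form $U_u=\{x:|x|\wedge u\in U\}$ (this is \cite[Lemma 2.16]{me}, which the paper invokes): either $U_u\subseteq[-u,u]$, or $U_u$ contains a non-trivial ideal --- indeed, if some $x\in U_u$ has $y:=(|x|-u)^+\neq 0$, then $ny\wedge u\le|x|\wedge u\in U$ for every $n$, and solidity of $U$ gives $I_y\subseteq U_u$. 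Since a bounded neighbourhood of zero in a Hausdorff TVS cannot contain a non-trivial ideal (each nonzero $x$ with $\lambda x\in V$ for all $\lambda$ would lie in $\varepsilon V$ for every $\varepsilon>0$, forcing $x=0$), the first alternative must hold, and $U_u\subseteq[-u,u]$ is the required order bounded $u\tau$-neighbourhood. Your sketch (``produce a sequence witnessing failure of boundedness'') does not supply this, so the implication is not established as written.

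For (ii)$\Rightarrow$(iii), which you single out as ``the heart of the matter'' and propose to attack with a $C(K)$-representation of $I_w$: this step is one line. Since $u\tau\subseteq\tau$, an order bounded $u\tau$-neighbourhood of zero is already an order bounded $\tau$-neighbourhood of zero; there is nothing to prove about $U_i$ (and in any case $U_i\subseteq U_{i,u}$ by solidity, so $U_i$ inherits order boundedness trivially). The representation-theoretic machinery you describe is unnecessary and, as sketched, not checkable. A smaller inaccuracy: in (v)$\Rightarrow$(vi) you assert $U_{i,u}=U_i$ when $u$ is a strong unit; only the inclusion $U_i\subseteq U_{i,u}$ is automatic, and the set equality fails even in $\mathbb{R}$. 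What is true, and what the paper cites \cite[Theorem 2.3]{KMT} for, is the equality of the topologies $u\widehat\tau=\widehat\tau$, which does require the identification of $\widehat\tau$ with the $\|\cdot\|_u$-topology.
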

\begin{proof}
(i)$\Rightarrow$(ii): Suppose $u\tau$ is locally bounded. Then there exists a neighbourhood $V$ of zero such that a base at zero for $u\tau$ is given by $\varepsilon V$ for $\varepsilon>0$. Find $U$ a solid neighbourhood of zero for $\tau$ and $u \in X_+$ so that, in the notation of \cite[Lemma 2.16]{me}, $U_u \subseteq V$. We claim that $V$ cannot contain a non-trivial ideal, so that $U_u$ cannot contain a non-trivial ideal, so that $U_u \subseteq [-u,u]$ by \cite[Lemma 2.16]{me}. Suppose $V$ contains a non-trivial ideal. Then there exists $x\neq 0$ such that $\lambda x\in V$ for all $\lambda>0$. However, this implies that $x\in \varepsilon V$ for all $\varepsilon>0$ and hence $x=0$. This is a contradiction.

(ii)$\Rightarrow$(iii) because $u\tau \subseteq \tau$.

(iii)$\Rightarrow$(iv): Suppose $\tau$ has an order bounded neighbourhood of zero, say, $[-u,u]_X$, where the subscript denotes the space in which the order interval is taken. It follows that $\tau$ is metrizable and $\overline{[-u,u]_X}^{\widehat{\tau}}$ is a $\widehat{\tau}$-neighbourhood of zero; it is contained in $[-u,u]_{\widehat{X}}$ since the cone $\widehat{X}_+$ is $\widehat{\tau}$-closed. Therefore, $\widehat{\tau}$ has an order bounded neighbourhood of zero and thus $\widehat{X}$ has a strong unit.

(iv)$\Rightarrow$(v): Let $u$ be a strong unit for $\widehat{X}$. Since $\tau$ is metrizable, $\widehat{\tau}$ is complete and metrizable. It is easy to see that $\widehat{X}$ is uniformly complete so that, by \cite[Theorem 5.21]{AB03}, the $\widehat{\tau}$ and $\|\cdot\|_u$-topologies agree. That $(\widehat{X},\|\cdot\|_u)$ is lattice isometric to a $C(K)$-space follows from Kakutani's representation theorem.

(v)$\Rightarrow$(vi): It follows from \cite[Theorem 2.3]{KMT} that $\widehat{\tau}=u\widehat{\tau}$ so that $u\widehat{\tau}$ is locally bounded.

(vi)$\Rightarrow$(i) follows since $(u\widehat{\tau})|_X=u\tau$. For the additional clause, it has been shown that $\widehat{\tau}=u\widehat{\tau}$ from which it follows that $\tau=(u\widehat{\tau})|_X=u(\widehat{\tau}|_X)=u\tau$.
\end{proof}
Clearly, \Cref{answer to LB} implies \cite[Theorem 2.3]{KMT}. We next show that our results cannot be extended to $u_A\tau$:
\begin{proposition}
Let $A$ be an order dense ideal of a Hausdorff locally solid vector lattice $(X,\tau)$. If $u_A\tau$ is locally bounded then $A=X$.
\end{proposition}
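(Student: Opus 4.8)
The plan is to leverage the characterization of local boundedness of unbounded topologies already established in \Cref{answer to LB}, but applied to the ideal $A$ rather than to all of $X$. First I would observe that $u_A\tau$ restricted to $A$ is precisely $u(\tau|_A)$, the unbounded topology on $A$ associated to the relative topology $\tau|_A$; this follows from the definition of $u_A\tau$ since the test vectors $a\in A_+$ are exactly the positive vectors of $A$ and intersecting the basic sets $U_{i,a}$ with $A$ gives the corresponding basic sets for $u(\tau|_A)$ on $A$. Since $A$ is order dense in $X$ and $\tau$ is Hausdorff, $u_A\tau$ is a Hausdorff locally solid topology on $X$, and its restriction to $A$ is Hausdorff as well.

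Next I would use the fact that local boundedness passes to linear subspaces: if $u_A\tau$ is locally bounded on $X$, then its restriction to $A$ is locally bounded. Hence $u(\tau|_A)$ is a locally bounded Hausdorff locally solid topology on the vector lattice $A$. Applying \Cref{answer to LB} to the vector lattice $A$ with the topology $\tau|_A$ — specifically the equivalence of (i) with (iv) — we conclude that the topological completion $\widehat{A}$ of $(A,\tau|_A)$ has a strong unit $u$, and moreover $(A,\tau|_A)$ is metrizable; by (v), $\widehat A$ is, up to lattice isometry, a $C(K)$-space with $u$ corresponding to $\one$. In particular $\tau|_A = \widehat\tau|_A$ coincides with the $\norm{\cdot}_u$-topology on $A$, so $A$ itself is an order dense majorizing sublattice of $C(K)$ and $u$ is a strong unit for $A$: that is, $I_u = A$ where $I_u$ is the ideal generated by $u$ in $A$.

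The key remaining step — and the one I expect to be the main obstacle — is to promote ``$u$ is a strong unit of $A$'' to ``$A = X$''. The idea is that $A$ being order dense in $X$ and also being an ideal forces $X \subseteq A$ once $A$ contains a vector that dominates a weak unit of $X$. Concretely: since $A$ is order dense in $X$, the band generated by $A$ in $X$ is all of $X$, and $u$ (being a strong unit of the order dense ideal $A$) is a weak unit of $X$, i.e. $B_u = X$. Now take any $x \in X_+$. The point is to show $x \in A$. Because $u\tau$ (equivalently $u_A\tau$, since $u_A\tau \subseteq u\tau$ and both are locally bounded hence metrizable, and actually by \Cref{answer to LB} one should get $\tau = u\tau$ here as well) is locally bounded, pulling back the order bounded neighbourhood structure one sees that $\tau$ itself must have an order bounded neighbourhood of zero on $X$ — indeed the bounded neighbourhood of $u_A\tau$ is of the form $U_{i,a}$ for some $a\in A_+$, and by the argument in (i)$\Rightarrow$(iii) of \Cref{answer to LB} it cannot contain a nontrivial ideal, hence is contained in $[-a,a]_X$ by \cite[Lemma 2.16]{me}. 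Thus $[-a,a]_X$ is a $\tau$-neighbourhood of zero, so $\tau$ has an order bounded neighbourhood, so by \Cref{answer to LB} applied to $X$ itself, $\widehat X$ has a strong unit; combining with $\tau = u\tau = u_A\tau$ and the fact that $A$ is $\tau$-dense (since $u_A\tau = u\tau$ forces $\overline{A}^\tau = X$ by \Cref{closure of ideals give same utau}) together with $A$ being an ideal containing the strong unit $a$ of $(X,\tau)$-completion — since $a\in A$ and $I_a$ is $\tau$-dense in $X$ while $A \supseteq I_a$ is an ideal, and $[-a,a]_X$ being a neighbourhood means $I_a = X$ — we get $X = I_a \subseteq A$, hence $A = X$.
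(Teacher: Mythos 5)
Your final paragraph contains exactly the paper's proof: a bounded $u_A\tau$-neighbourhood contains a basic set $U_{i,a}$ with $a\in A_+$ which cannot contain a nontrivial ideal, hence $U_{i,a}\subseteq[-a,a]$ by \cite[Lemma 2.16]{me}, and since neighbourhoods are absorbing $a$ is a strong unit of $X$, giving $X=I_a\subseteq A$. The first two paragraphs (restricting to $A$, completing, invoking Kakutani) and the parenthetical claim that $u\tau$ is locally bounded iff $u_A\tau$ is (neither implication is justified, as local boundedness passes neither to finer nor to coarser topologies in general) are detours that your final argument never uses, so the proof is correct but should be pruned to its last paragraph.
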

\begin{proof}
Notice first that $u_A\tau$ is Hausdorff. Assuming $u_A\tau$ is locally bounded, there exists a neighbourhood $V$ such that a base at zero for $u_A\tau$ is given by $\varepsilon V$ for $\varepsilon>0$. Find $U$ a solid neighbourhood of zero for $\tau$ and $a \in A_+$ so that, in the notation of \cite[Lemma 2.16]{me}, $U_a \subseteq V$. As in the proof of \Cref{answer to LB}, $U_a \subseteq [-a,a]$. Since neighbourhoods are absorbing, $a$ is a strong unit for $X$. Therefore, $X=I_a\subseteq A$.
\end{proof}
\begin{example}
Consider $(X_1,\tau_1):=(C[0,1],\|\cdot\|_{\infty})$. $X_1$ is a complete, Hausdorff, locally bounded, unbounded locally solid vector lattice that has a strong unit. On the other hand, $(X_2,\tau_2):=(C[0,1],\|\cdot\|_2)$ is a Hausdorff, locally bounded, locally solid topology, but the topology $u\tau_2$ is not locally bounded. 
This is consistent with \Cref{answer to LB} as $\widehat{(X_2,\tau_2)}=(L_2,\|\cdot\|_2)$ does not have a strong unit.
\end{example}
\begin{corollary}
Let $X$ be a vector lattice admitting a minimal topology $\tau$. TFAE:
\begin{enumerate}
\item $\tau$ is locally bounded;
\item $X$ is finite dimensional.
\end{enumerate}
\end{corollary}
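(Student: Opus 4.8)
The plan is to handle the two implications separately, with essentially all the content in (i)$\Rightarrow$(ii). The implication (ii)$\Rightarrow$(i) is immediate: a finite-dimensional Archimedean vector lattice is Riesz isomorphic to some $\RR^n$, and every Hausdorff linear topology on $\RR^n$ is the Euclidean one; in particular $\tau$ is normable, hence locally bounded.

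For (i)$\Rightarrow$(ii), I would argue by contradiction, combining \Cref{a.e. implies measure} with \Cref{answer to LB}. Since $\tau$ is minimal it is unbounded by \Cref{a.e. implies measure}, so $\tau=u\tau$; thus the hypothesis that $\tau$ is locally bounded says exactly that $u\tau$ is locally bounded, and \Cref{answer to LB} applies. In particular, the implication (i)$\Rightarrow$(iii) of \Cref{answer to LB} provides an order bounded $\tau$-neighbourhood of zero which, as in the proof of that theorem, we may write as $[-u,u]$ for some $u\in X_+$. A neighbourhood of zero is absorbing, so for each $x\in X$ a positive multiple of $x$ lies in $[-u,u]$, i.e. $x\in I_u$; hence $u$ is a strong unit for $X$ and $\|\cdot\|_u$ is a lattice norm.

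Now suppose $X$ is infinite-dimensional. Then $X$ contains an infinite pairwise disjoint system $(e_n)\subseteq X_+\setminus\{0\}$; after replacing each $e_n$ by the positive multiple $\tfrac{2}{\|e_n\|_u}e_n$ we may assume $\|e_n\|_u=2$, so $e_n\not\le u$ and hence $e_n\notin[-u,u]$ for every $n$. On the other hand, a disjoint sequence is $uo$-null (see \cite{GTX}), so by \Cref{a.e. implies measure} (minimality is equivalent to the property that $uo$-null nets are $\tau$-null) we get $e_n\goestau 0$; but then $e_n$ lies in the $\tau$-neighbourhood $[-u,u]$ for all large $n$, a contradiction. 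Hence $X$ is finite-dimensional. The only step that is not a direct appeal to the two cited theorems is the existence of an infinite disjoint system in an infinite-dimensional Archimedean vector lattice; I regard this as the (minor) main obstacle, and the full write-up would dispatch it in a line or two using the classification of finite-dimensional Archimedean vector lattices.
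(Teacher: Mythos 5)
Your proof is correct, but it takes a genuinely different route from the paper's. The paper also starts by noting that minimality gives $\tau=u\tau$ and invoking \Cref{answer to LB}, but it then leaves $X$ entirely: it concludes that the topological completion $\widehat{X}$ has a strong unit, transfers this to the universal completion $X^u$ via \cite[Theorem 5.2]{me}, and finishes with \cite[Theorem 7.47]{AB03} (a universally complete vector lattice with a strong unit is finite dimensional). You instead stay inside $X$: from the order bounded neighbourhood you extract a strong unit $u$ of $X$ itself, and then play the second characterization of minimality in \Cref{a.e. implies measure} ($uo$-null nets are $\tau$-null) against a normalized infinite disjoint sequence, which is $uo$-null by \cite{GTX} yet never enters the neighbourhood $[-u,u]$. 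This is more self-contained --- it avoids the completion machinery at the cost of the standard fact that an infinite-dimensional Archimedean vector lattice contains an infinite disjoint system of nonzero positive vectors, which is essentially the content of the finite-dimensional classification you cite and is also what underlies \cite[Theorem 7.47]{AB03}. Two small points of wording: \Cref{answer to LB} produces a neighbourhood \emph{contained in} some $[-u,u]$ rather than equal to it, so you should say that $[-u,u]$ is a neighbourhood because it contains one; and the finiteness of $\|\cdot\|_u$ on all of $X$ (needed for your normalization) comes precisely from $[-u,u]$ being absorbing, i.e.\ from $u$ being a strong unit, so that observation should precede the scaling step.
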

\begin{proof}
(i)$\Rightarrow$(ii): Since $\tau$ is minimal, it is Hausdorff, and $\tau=u\tau$. \Cref{answer to LB} implies that $\widehat{X}$ has a strong unit. By \cite[Theorem 5.2]{me}, $X^u$ has a strong unit. By \cite[Theorem 7.47]{AB03}, $X$ is finite dimensional. The other direction is clear.
\end{proof}

\section{Measure-theoretic results}
In this section we investigate relations between minimal topologies and $uo$-convergence. As an application, we deduce classical results in Measure theory using our measure-free language. We begin with a definition:

\begin{definition}
Let $X$ be a vector lattice. We say that $uo$-convergence on $X$ is \term{sequential} if whenever $(x_{\alpha})_{\alpha\in A}$ is a $uo$-null net in $X$ there exists an increasing sequence of indices $\alpha_n\in A$ such that $x_{\alpha_n}\xrightarrow{uo}0$.
\end{definition}

\begin{theorem}\label{COBCSP}
Let $X$ be a vector lattice. TFAE:
\begin{enumerate}
\item $X$ has a countable order basis and the countable sup property;
\item $X^u$ has the countable sup property;
\item $uo$-convergence on $X^u$ is sequential.
\end{enumerate}
Moreover, in this case, $uo$-convergence on $X$ is sequential.
\end{theorem}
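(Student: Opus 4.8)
The plan is to establish the cycle (i)$\Rightarrow$(ii)$\Rightarrow$(iii)$\Rightarrow$(i), and then deduce the ``moreover'' clause. For (i)$\Rightarrow$(ii): if $X$ has a countable order basis $\{u_n\}$, then $X^u$ has a weak unit, namely the element corresponding to $\sup_n \frac{u_n}{1+\|u_n\|_?}$ inside the universal completion — more precisely, $X$ is order dense in $X^u$, so a countable order basis for $X$ remains one for $X^u$, and in a universally complete vector lattice a countable order basis can be replaced by a single weak unit by taking a suitable order-convergent sum (universal completeness guarantees the sup exists). The countable sup property passes from $X$ to $X^u$: this is the delicate point, but it is a known fact (see \cite[Theorem 7.49]{AB03} or the surrounding discussion) that the universal completion of a vector lattice with the countable sup property and a countable order basis again has the countable sup property. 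Thus (ii) holds.

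For (ii)$\Rightarrow$(iii): since $X^u$ is universally complete, order convergence and $uo$-convergence agree for sequences by \cite[Theorem 3.9]{GTX}. I would show that in a universally $\sigma$-complete space with the countable sup property, every $uo$-null net admits an increasing sequence of indices along which it is still $uo$-null (equivalently order null). The key mechanism: given $x_\alpha\xrightarrow{uo}0$ in $X^u$, fix a weak unit $e$ (which exists since $X^u$ is universally complete with a countable order basis — or handle it band-by-band), so $|x_\alpha|\wedge e\xrightarrow{o}0$, and use the countable sup property to extract a dominating decreasing sequence $y_k\downarrow 0$ realizing the order convergence; then pick indices $\alpha_k$ with $|x_{\alpha_k}|\wedge e\le y_k$, and argue that the resulting sequence is $uo$-null. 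The countable sup property is exactly what lets us replace the (possibly uncountable) directed family witnessing $y_\beta\downarrow 0$ by a cofinal sequence.

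For (iii)$\Rightarrow$(i): if $uo$-convergence on $X^u$ is sequential, I would first recover the countable sup property of $X^u$ (hence of $X$, since the countable sup property passes to regular sublattices): a disjoint net $(e_\alpha)$ in $X^u_+$ is $uo$-null, so by sequentiality it has a $uo$-null — hence order null, hence eventually zero on each band — subsequence $(e_{\alpha_n})$; combined with disjointness this forces $(e_\alpha)$ to be eventually zero, which yields the countable sup property via the standard characterization. To get the countable order basis, I would argue by contradiction: if $X$ (equivalently $X^u$, since bands correspond) had no countable order basis, there would be an uncountable family of pairwise disjoint nonzero bands, and one can build a $uo$-null net indexed by the finite subsets of an uncountable set that admits no $uo$-null subsequence, contradicting sequentiality. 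Finally, the ``moreover'' clause: $X$ is a regular (order dense) sublattice of $X^u$, and $uo$-convergence is intrinsic under regular embeddings by \cite[Theorem 3.2]{GTX}, so a $uo$-null net in $X$ is $uo$-null in $X^u$; applying sequentiality there and pulling the resulting sequence back gives sequentiality of $uo$-convergence on $X$.

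The main obstacle I anticipate is the passage of the countable sup property between $X$ and $X^u$ in the presence of a countable order basis — in one direction this requires care because the universal completion is much larger, and in the reverse direction one must be sure the countable sup property genuinely descends to the order dense sublattice $X$. The extraction argument in (ii)$\Rightarrow$(iii) is also somewhat technical: one must juggle the fact that $uo$-convergence is tested against all of $X^u_+$ while the countable sup property only directly tames the order convergence of the single sequence $|x_\alpha|\wedge e$, so reducing the general test to the weak unit $e$ (using that $e$ is a weak unit, so testing against $e$ suffices for $uo$-convergence — a consequence of \cite[Lemma 2.2]{LC}) is the crux of making the argument go through.
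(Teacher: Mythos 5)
Your overall architecture (a cycle through the three conditions plus a regularity argument for the ``moreover'' clause) is sound in outline, and your (ii)$\Rightarrow$(iii) and the ``moreover'' part essentially coincide with the paper's proof. But two steps are genuinely broken. First, in (i)$\Rightarrow$(ii) you defer the crux --- that the countable sup property of $X$ together with a countable order basis forces the countable sup property of $X^u$ --- to ``a known fact (see \cite[Theorem 7.49]{AB03})''. That theorem concerns topological convergence of order convergent sequences in universally $\sigma$-complete spaces and does not assert this; the transfer of the countable sup property to the (much larger) universal completion is precisely the nontrivial content of the implication and must be proved. The paper does it by hand: after passing to $X^\delta$ (which inherits both hypotheses) so that $X$ may be taken to be an ideal of $X^u$, any disjoint set $A\subseteq X^u_+$ has the property that each $A\wedge u_n$ is an order bounded disjoint subset of $X_+$, hence at most countable by the countable sup property of $X$; since $\{u_n\}$ is an order basis of $X^u$, at most countably many members of $A$ are nonzero, and \cite[Exercise 1.15]{AB03} finishes.

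Second, your (iii)$\Rightarrow$(i) argument for the countable sup property begins with the assertion that a disjoint \emph{net} $(e_\alpha)$ in $X^u_+$ is $uo$-null. Only disjoint \emph{sequences} are automatically $uo$-null; a pairwise disjoint net indexed by a directed set with a largest index $\alpha^*$ and $e_{\alpha^*}\neq 0$ cannot be $uo$-null, and more generally uncountable disjoint families indexed in the natural way need not be. Moreover, a $uo$-null subsequence is not ``eventually zero on each band'' (the unit vectors of $c_0$ are $uo$-null and never zero), so the deduction that the net is eventually zero does not follow. The repair is both shorter and is what makes the paper's (iii)$\Rightarrow$(ii) ``clear'': if $x_\alpha\downarrow 0$ then $x_\alpha\xrightarrow{uo}0$, sequentiality yields increasing indices with $x_{\alpha_n}\xrightarrow{uo}0$, and since $(x_{\alpha_n})$ is decreasing this forces $\inf_n x_{\alpha_n}=0$, which is exactly the countable sup property. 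The countable order basis of $X$ is then obtained from (ii) as in the paper: $X^u$, being universally complete, has a weak unit $e$; order density of $X$ gives a net in $X_+$ increasing to $e$, and the countable sup property extracts a sequence, which is a countable order basis --- no net indexed by finite subsets of an uncountable set is needed.
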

\begin{proof}
(i)$\Rightarrow$(ii): Let $\{u_n\}\subseteq X_+$ be a countable order basis for $X$. It follows from order density of $X$ in $X^{\delta}$ that $\{u_n\}$ is also a countable order basis of $X^{\delta}$. It is known that $X^{\delta}$ inherits the countable sup property from $X$: see \cite[Lemma 1.44]{AB03}. Therefore, since $X$ and $X^{\delta}$ have the same universal completion, we may assume, by passing to $X^{\delta}$, that $X$ is order complete. This implies that $X$ is an ideal of $X^u$. Since $X$ is order dense in $X^u$, $\{u_n\}$ is a countable order basis of $X^u$.

Let $A$ be a non-empty disjoint  subset of $X^u_+$. For each $n$, the set $A_n:=A\wedge u_n$ is a non-empty order bounded disjoint subset of $X_+$. Since $X$ has the countable sup property, for each $n$, $A_n$ is at most countable. Hence, taking into account that $A$ is disjoint, the set of all $a\in A$ such that $a \wedge u_n\neq 0$ for some $n$ is at most countable. Since $\{u_n\}$ is a countable order basis for $X^u$, we conclude that at most countably many $a\in A$ are non-zero. 
\cite[Exercise 1.15]{AB03} yields that $X^u$ has the countable sup property.

(ii)$\Rightarrow$(i): It is clear that $X$ inherits the countable sup property from $X^u$, so we show that $X$ has a countable order basis. Since $X^u$ is universally complete, it has a weak unit $e$. Since $X$ is order dense in $X^u$, there exists a net $(x_{\alpha})$ in $X_+$ such that $x_{\alpha}\uparrow e$. By the countable sup property there is an increasing sequence $(\alpha_n)$ such that $x_{\alpha_n}\uparrow e$. It is easy to see that $(x_{\alpha_n})$ is a countable order basis of $X$.

(ii)$\Leftrightarrow$(iii): It is clear that if $uo$-convergence is sequential on $X^u$ then $X^u$ has the countable sup property. For the converse, let $e$ be a weak unit of $X^u$. It is both known and easy to check that a vector lattice $X$ has the countable sup property iff order convergence is sequential. Using this, we conclude that for a net $(x_{\alpha})$ in $X^u$, $$x_{\alpha}\xrightarrow{uo}0\Leftrightarrow |x_{\alpha}|\wedge e\xrightarrow{o}0\Rightarrow \exists \alpha_n \ |x_{\alpha_n}|\wedge e\xrightarrow{o}0\Leftrightarrow x_{\alpha_n}\xrightarrow{uo}0.$$
For the moreover clause, let $(x_{\alpha})$ be a net in $X$ such that $x_{\alpha}\xrightarrow{uo}0$ in $X$. Since $X$ is regular in $X^u$, $x_{\alpha}\xrightarrow{uo}0$ in $X^u$. Since $uo$-convergence on $X^u$ is sequential, there is an increasing sequence $(\alpha_n)$ of indices such that $x_{\alpha_n}\xrightarrow{uo}0$ in $X^u$, hence in $X$.
\end{proof}
\begin{remark}
If $uo$-convergence on $X$ is sequential then, of course, $X$ has the countable sup property. However, it does not follow that $X$ has a countable order basis. Indeed, let $X$ be an order continuous Banach lattice. By \cite[Corollary 3.5]{DOT}, $uo$-convergence is sequential. Note that, in Banach lattices, admitting a countable order basis is the same as admitting a weak unit, and not all order continuous Banach lattices admit weak units.
\end{remark}
\Cref{COBCSP} completes \cite[Lemma 2.9]{LC}. Combining with \Cref{cool thm} we get:
\begin{corollary}\label{Met CSP}
Let $X$ be a vector lattice admitting a minimal topology $\tau$. Then $\tau$ is metrizable if and only if $X^u$ has the countable sup property.
\end{corollary}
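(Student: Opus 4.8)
The plan is to chain together two results already established in the excerpt: \Cref{cool thm}, which characterizes metrizability of a minimal topology $\tau$ on $X$ as equivalent to ``$X$ has the countable sup property and a countable order basis'', and \Cref{COBCSP}, which asserts (equivalence of (i) and (ii)) that ``$X$ has a countable order basis and the countable sup property'' is equivalent to ``$X^u$ has the countable sup property''. Putting these together immediately gives the claimed equivalence: $\tau$ is metrizable $\iff$ $X$ has the countable sup property and a countable order basis $\iff$ $X^u$ has the countable sup property.

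More precisely, first I would invoke \Cref{cool thm}: since $X$ admits the minimal topology $\tau$, that theorem applies verbatim and tells us $\tau$ is metrizable precisely when $X$ has the countable sup property and a countable order basis. Then I would apply the equivalence (i)$\Leftrightarrow$(ii) of \Cref{COBCSP} to rewrite the condition ``$X$ has a countable order basis and the countable sup property'' as ``$X^u$ has the countable sup property''. No further argument is needed; the corollary is a direct composition of the two cited results, and there is no genuine obstacle — the only thing to be careful about is that the hypotheses of both theorems are met, which they are since ``$X$ admits a minimal topology'' is exactly the standing assumption of \Cref{cool thm} and \Cref{COBCSP} has no hypothesis beyond ``$X$ is a vector lattice''.

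The one-line write-up is therefore: \emph{By \Cref{cool thm}, $\tau$ is metrizable iff $X$ has the countable sup property and a countable order basis. By the equivalence of (i) and (ii) in \Cref{COBCSP}, the latter holds iff $X^u$ has the countable sup property.}
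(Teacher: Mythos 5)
Your proposal is correct and matches the paper exactly: the corollary is stated in the paper immediately after the remark ``\Cref{COBCSP} completes [LC, Lemma 2.9]. Combining with \Cref{cool thm} we get:'', so the intended proof is precisely the composition of \Cref{cool thm} with the equivalence (i)$\Leftrightarrow$(ii) of \Cref{COBCSP} that you describe.
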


Recall that a measure space $(\Omega,\Sigma,\mu)$ is  \term{semi-finite} if whenever $E\in \Sigma$ and $\mu(E)=\infty$ there is an $F\subseteq E$ such that $F\in \Sigma$ and $0<\mu(F)<\infty$. Semi-finiteness of the measure is equivalent to the topology of convergence in measure on $L_0(\mu)$ being Hausdorff (see e.g. \cite[Theorem 245E]{Fre}).

\begin{proposition}\label{CSP==sigma-finite}
Let $(\Omega, \Sigma,\mu)$ be a semi-finite measure space. Then $L_0(\mu)$ has the countable sup property iff $\mu$ is $\sigma$-finite.
\end{proposition}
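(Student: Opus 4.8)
The plan is to prove the two implications separately. Two standard facts will be used. First: if $(f_n)$ is a sequence in $L_0(\mu)$ that is bounded above in $L_0(\mu)$, then $\sup_n f_n$ exists in $L_0(\mu)$ and equals the $\mu$-a.e.\ pointwise supremum (no hypothesis on $\mu$ is needed here). Second: the characterisation of the countable sup property via disjoint sets already used in the proof of \Cref{COBCSP} (cf.\ \cite[Exercise~1.15]{AB03}): a vector lattice $X$ has the countable sup property if and only if every order bounded disjoint subset of $X_+$ is at most countable.

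For the implication ``$\mu$ $\sigma$-finite $\Rightarrow$ countable sup property'' I would argue as follows. Write $\Omega=\bigcup_n\Omega_n$ with $\mu(\Omega_n)<\infty$, and let $A\subseteq L_0(\mu)_+$ be a disjoint family with $0\notin A$. Put $E_f=\{f>0\}$ for $f\in A$; disjointness in $L_0(\mu)$ means exactly that the $E_f$ are pairwise disjoint modulo $\mu$-null sets, and $\mu(E_f)>0$. For each fixed $n$, any finite $S\subseteq A$ satisfies $\sum_{f\in S}\mu(E_f\cap\Omega_n)=\mu\bigl(\bigcup_{f\in S}(E_f\cap\Omega_n)\bigr)\le\mu(\Omega_n)<\infty$, so $\{f\in A:\mu(E_f\cap\Omega_n)>0\}$ is at most countable; since $E_f=\bigcup_n(E_f\cap\Omega_n)$ and $\mu(E_f)>0$, every $f\in A$ lies in one such set, whence $A$ is at most countable. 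In particular every order bounded disjoint subset of $L_0(\mu)_+$ is at most countable, so $L_0(\mu)$ has the countable sup property.

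For the converse I would argue: suppose $L_0(\mu)$ has the countable sup property. By Zorn's lemma, fix a family $\mathcal F$ of sets of finite positive measure that is maximal among those whose members are pairwise disjoint modulo $\mu$-null sets. Let $D=\{\chi_F:F\text{ is a finite union of members of }\mathcal F\}$, a subset of $[0,\one]\subseteq L_0(\mu)$. The first key step is to show $\sup D=\one$: clearly $\one$ is an upper bound for $D$, and if $h\in L_0(\mu)$ is any upper bound then $\mu(E\cap\{h<1\})=0$ for every $E\in\mathcal F$, so were $\mu(\{h<1\})>0$ then semi-finiteness would yield $F\subseteq\{h<1\}$ with $0<\mu(F)<\infty$, and maximality of $\mathcal F$ would force $\mu(F\cap E)>0$ for some $E\in\mathcal F$ (one may take $E=F$ when $F\in\mathcal F$), contradicting $\mu(F\cap E)\le\mu(E\cap\{h<1\})=0$; hence $h\ge\one$ $\mu$-a.e., and so $\sup D=\one$. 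Applying the countable sup property to $D$ then gives a sequence $(F_m)$ of finite unions of members of $\mathcal F$ with $\sup_m\chi_{F_m}=\one$; by the first standard fact this supremum equals $\chi_{\bigcup_m F_m}$, so $\mu\bigl(\Omega\setminus\bigcup_m F_m\bigr)=0$. Since $\bigcup_m F_m$ is a countable union of members of $\mathcal F$, each of finite measure, $\mu$ is $\sigma$-finite.

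The step I expect to be the main obstacle is the first key step of the converse: one needs an explicitly described subset of $L_0(\mu)$ whose supremum exists \emph{and} equals $\one$. The naive candidate---the set of indicators of all finite-measure sets---admits $\one$ as a least upper bound only when its essential supremum exists, and this can fail because $L_0(\mu)$ need not be Dedekind complete for merely semi-finite $\mu$; passing to the finite unions drawn from a \emph{maximal} disjoint family and checking $\sup D=\one$ by hand (using semi-finiteness together with maximality) is what circumvents this, and this is where the semi-finiteness hypothesis is used.
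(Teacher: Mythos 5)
Your proof is correct, and for the harder (converse) direction it follows the same skeleton as the paper's: Zorn's lemma produces a maximal family $\mathcal F$ of pairwise disjoint sets of finite positive measure, and semi-finiteness together with maximality shows that the complement of a countable union of members of $\mathcal F$ is null. Where you genuinely diverge is in how the countable sup property is brought to bear. The paper applies the disjoint-system characterization of the countable sup property (\cite[Exercise 1.15]{AB03}) directly to the order bounded disjoint system $\{\chi_F: F\in\mathcal F\}$ to conclude that $\mathcal F$ itself is at most countable; you instead pass to the set $D$ of indicators of finite unions drawn from $\mathcal F$, verify by hand (via semi-finiteness and maximality) that $\sup D=\one$ exists in $L_0(\mu)$, and only then invoke the countable sup property in its raw definitional form. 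Your route is a little longer, but it buys something real: it never needs a supremum of an uncountable family to exist, which is exactly the issue you flag at the end, since $L_0(\mu)$ of a merely semi-finite (non-localizable) measure need not be Dedekind complete, so the implication ``countable sup property $\Rightarrow$ order bounded disjoint systems are countable'' is not automatic there. For the forward direction the paper simply cites \cite[Theorem 7.73]{AB03}; your direct argument via pairwise disjoint supports of positive measure is fine, though when you invoke the converse half of Exercise 1.15 you should note explicitly that $L_0(\mu)$ is Dedekind (indeed universally) complete when $\mu$ is $\sigma$-finite, as that half of the characterization is what requires completeness.
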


\begin{proof}
If $\mu$ is $\sigma$-finite, then $L_0(\mu)$ has the countable sup property by \cite[Theorem 7.73]{AB03}.

For the converse, assume $L_0(\mu)$ has the countable sup property. Let $\mathcal C$ be the collection of all families of pairwise disjoint measurable sets of finite non-zero measure. The family $\mathcal C$ is partially ordered by inclusion. If $\mathcal C_0$ is a chain in $\mathcal C$, then the union of the chain is an upper bound for $\mathcal C_0$ in $\mathcal C$. Hence, by Zorn's lemma there is a maximal family $\mathcal F$ of pairwise disjoint measurable sets of finite non-zero measure. The set of functions $\{\chi_F:\; F\in \mathcal F\}$ is bounded above by $\one$ in $L_0(\mu)$. Let $E$ be the union of all sets in $\mathcal F$. Since $L_0(\mu)$ has the countable sup property, \cite[Exercise 1.15]{AB03} implies $\mathcal F$ is at most countable, so that $E$ is measurable. If $\mu(X\setminus E)>0$, since $\mu$ is semi-finite, there is a measurable subset $E'\subseteq X\setminus E$ with $0<\mu(E')<\infty.$ This contradicts maximality of $\mathcal F$. Hence, $\mu(X\setminus E)=0$ and $\mu$ is $\sigma$-finite.
\end{proof}
\begin{remark}
By \cite[Theorem 7.73]{AB03}, if $\mu$ is a $\sigma$-finite measure then for $0\leq p\leq \infty$, $L_0(\mu)$ is the universal completion of $L_p(\mu)$ and, moreover, $L_0(\mu)$ has the countable sup property.  In this case, \Cref{Met CSP} simply states that convergence in measure in $L_p(\mu)$ is metrizable. This is consistent with classical results. Indeed, it is known that the topology of convergence in measure on $L_0(\mu)$ is metrizable iff $\mu$ is $\sigma$-finite (see e.g. \cite[Theorem 245E]{Fre}). \Cref{CSP==sigma-finite} suggests  using the countable sup property as a replacement for $\sigma$-finiteness in general vector lattices.
\end{remark}

It is known (see \cite[Exercise 5.8]{AB03}) that if $\tau$ is a complete metrizable locally solid topology then one can extract order convergent subsequences from $\tau$-convergent sequences. Note that, by \cite[Theorem 5.20]{AB03}, $\tau$ is the greatest locally solid topology on $X$ in the sense that if $\tau'$ is a locally solid topology on $X$ then $\tau' \subseteq \tau$. The next theorem shows that it is often possible to extract $uo$-convergent sequences from nets that converge in significantly weaker topologies.

\begin{theorem}\label{measure to a.e.}
Let $(X,\tau)$ be a Hausdorff locally solid vector lattice with the Fatou property. Assume that $C_{\tau}$ has a countable order basis and let $(x_{\alpha})_{\alpha\in A}$ be a net in $X$. If $x_{\alpha}\xrightarrow{\tau}0$ in $X$ then there exists an
increasing sequence of indices $\alpha_n\in A$ such that $x_{\alpha_n}\xrightarrow{uo}0$ in $X$.
\end{theorem}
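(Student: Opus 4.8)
The plan is to distill, from the countable order basis together with the Fatou property, a \emph{single countable} decreasing family of solid neighbourhoods of zero that already separates points, to choose the indices $(\alpha_n)$ so that the $\tau$-null net sinks quickly into this family, and then to run an abstract Borel--Cantelli argument inside the order completion $X^\delta$. For the first step: since $\tau$ is Hausdorff we have $C_\tau=X$ by \cite[Theorem 4.17(b)]{AB03}, so the hypothesis simply says $X$ has a countable order basis; fix a positive increasing sequence $(u_n)$ with $B(\{u_n\})=X$ (\Cref{COB Trick}). Imitating the construction in the proof of \cite[Theorem 4.17(a)]{AB03} — and using that $\tau$ has the Fatou property so that the neighbourhoods occurring there may be taken Fatou — I would produce a normal sequence $\{U_n\}$ of solid Fatou $\tau$-neighbourhoods of zero with $\{u_n\}\subseteq N^d$, where $N:=\bigcap_n U_n$. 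As $N$ is solid, $N^d$ is a band, and it contains $\{u_n\}$, so $N^d\supseteq B(\{u_n\})=X$; hence $N\subseteq N^{dd}=\{0\}$, i.e. $\bigcap_n U_n=\{0\}$.

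Next I would choose the indices and record a partial-sum estimate. Since $x_\alpha\xrightarrow{\tau}0$ and each $U_n$ is a $\tau$-neighbourhood of zero, one may pick an increasing sequence of indices $\alpha_n\in A$ with $x_\beta\in U_n$ for all $\beta\ge\alpha_n$; in particular $|x_{\alpha_m}|\in U_m$ for every $m$. The crucial elementary point is that normality confines finite sums to a fixed $U_n$: a routine induction on $k$ using $U_{j+1}+U_{j+1}\subseteq U_j$ shows that if $v_i\in U_{n+i}$ for $i=1,\dots,k$ then $v_1+\dots+v_k\in U_n$; applying this with $v_i=|x_{\alpha_{n+i}}|$ gives that $c_{n,k}:=\sum_{i=1}^{k}|x_{\alpha_{n+i}}|$ lies in $U_n$ for all $n,k$, and $c_{n,k}\uparrow$ in $k$.

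Now pass to the order completion $X^\delta$, in which $X$ is order dense; by \cite[Theorem 3.2]{GTX} it suffices to prove $x_{\alpha_n}\xrightarrow{uo}0$ in $X^\delta$. Fix $w\in X^{\delta}_+$ and put $y_m:=|x_{\alpha_m}|\wedge w\in[0,w]$. Since $[0,w]$ is order bounded, $g_n:=\sup_{m>n}y_m$ exists in $X^\delta$, $g_n\downarrow g:=\inf_n g_n\ge 0$, and $y_m\le g_n$ whenever $m>n$; so everything reduces to showing $g=0$, which is the heart of the proof and the analogue of the Borel--Cantelli step in the classical passage from convergence in measure to an a.e.-convergent subsequence. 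I would argue by contradiction: if $g\ne 0$, order density of $X$ in $X^\delta$ gives $x\in X$ with $0<x\le g$. Fix $n$; the partial suprema $\bar h_{n,k}:=\sup_{n<m\le n+k}y_m$ satisfy $\bar h_{n,k}\le c_{n,k}\in U_n$ and $\bar h_{n,k}\uparrow g_n$, so from $x\le g\le g_n$ the infinite distributive law gives $x=\sup_k(x\wedge\bar h_{n,k})\le\sup_k(x\wedge c_{n,k})\le x$; thus $(x\wedge c_{n,k})_k$ is an increasing sequence in $X$, contained in $U_n$ by solidity, with supremum $x$, and the Fatou property of $U_n$ yields $x\in U_n$. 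As $n$ was arbitrary, $x\in\bigcap_n U_n=\{0\}$, a contradiction. Hence $g=0$, so $|x_{\alpha_m}|\wedge w=y_m\xrightarrow{o}0$ in $X^\delta$ (witnessed by $g_n\downarrow 0$); since $w\in X^{\delta}_+$ was arbitrary, $x_{\alpha_n}\xrightarrow{uo}0$ in $X^\delta$, and hence in $X$ by \cite[Theorem 3.2]{GTX}.

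The main obstacle is the absence of order completeness in $X$: the natural ``$\limsup$'' elements $g_n$ need not exist in $X$, while the Fatou property — the abstract substitute for $\sigma$-additivity of measure — is only available for $\tau$ on $X$. The resolution is to form all suprema in $X^\delta$ but to apply the Fatou property only to the auxiliary sequence $(x\wedge c_{n,k})_k$, whose terms lie in $X$, bridging the two via the partial-sum estimate of the second step and the order density of $X$ in $X^\delta$. A secondary point requiring care is that the carrier hypothesis genuinely needs Hausdorffness (otherwise a nonzero constant net in $C_\tau^d$ would be $\tau$-null but not $uo$-null), which is why $C_\tau=X$ is invoked at the outset.
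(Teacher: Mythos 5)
Your argument is sound in its overall structure and reaches the conclusion by a genuinely different route from the paper. The paper extends $\tau$ to the (unique) Fatou topology $\tau^{\delta}$ on $X^{\delta}$ via \cite[Theorem 4.12]{AB03}, picks the subsequence using the Cauchy condition $x_{\alpha_{n+1}}-x_{\alpha_n}\in V_{n+2}$, and then invokes \cite[Lemma 4.14]{AB03} to conclude that $\limsup_n|x_{\alpha_n}|\wedge u_k$ lies in $N\cap N^d=\{0\}$, testing only against the countable order basis $\{u_k\}$. You instead keep the topology on $X$, test against an arbitrary $w\in X^{\delta}_+$, and replace Lemma 4.14 by a hands-on Borel--Cantelli argument: the partial-sum estimate $c_{n,k}\in U_n$, the infinite distributive law in $X^{\delta}$, order density of $X$ in $X^{\delta}$ to produce $0<x\le g$, and the Fatou property of $U_n$ applied to the increasing sequence $(x\wedge c_{n,k})_k\uparrow x$ \emph{inside $X$}. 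All of these steps check out (in particular the induction giving $v_1+\dots+v_k\in U_n$, the squeeze $\sup_k(x\wedge c_{n,k})=x$, and the passage of $uo$-convergence from $X^{\delta}$ to $X$ via regularity), and your version is more self-contained, at the price of a more delicate interplay between $X$ and $X^{\delta}$.

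The one genuine mis-step is the opening claim that ``since $\tau$ is Hausdorff we have $C_\tau=X$ by \cite[Theorem 4.17(b)]{AB03}.'' That part of the theorem concerns Hausdorff \emph{Lebesgue} topologies (that is how it is used elsewhere in this paper), whereas here $\tau$ is only assumed Hausdorff and Fatou, and the conclusion is false in that generality: for instance, on $\ell_\infty(J)$ with $J$ uncountable and $\tau$ the (Hausdorff, Fatou, non-Lebesgue) topology of uniform convergence on countable subsets of $J$, the carrier is the proper order dense ideal of countably supported functions. Fortunately your proof does not need $C_\tau=X$; it only needs that a countable order basis $\{u_n\}$ of $C_\tau$ is a countable order basis of $X$, so that $N^d\supseteq B(\{u_n\})=X$. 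This follows from the correct statement that the carrier of a Hausdorff Fatou topology is an \emph{order dense} ideal of $X$ (the fact the paper itself invokes): if $x\in X_+$ is disjoint from every $u_n$, order density of $C_\tau$ produces $0<y\le x$ with $y\in C_\tau$, and $y\perp u_n$ for all $n$ forces $y=0$. With that substitution the rest of your argument goes through unchanged.
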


\begin{proof}
Let $\{u_k\} \subseteq (C_{\tau})_+$ be a countable order basis for $C_{\tau}$.  By \cite[Theorem 4.12]{AB03}, $\tau$ extends uniquely to a Fatou topology $\tau^{\delta}$ on $X^{\delta}$. By \cite[Exercise 4.5]{AB03}, $\{u_k\}\subseteq C_{\tau^{\delta}}$. \cite[Theorem 4.17]{AB03} tells us that $\{u_k\}$ is a countable order basis for $C_{\tau^{\delta}}$ and $C_{\tau^{\delta}}$ is an order dense ideal of $X^{\delta}$.

Assume that a net $(x_{\alpha})$ in $X$ satisfies $x_{\alpha}\xrightarrow{\tau}0$. As in the proof of \cite[Theorem 4.17]{AB03}, choose a normal sequence $\{V_n\}$ of Fatou  $\tau^{\delta}$-neighbourhoods of zero such that $\{u_k\}\subseteq N^d$ where $N=\bigcap_{n=1}^{\infty} V_n$. Since $(x_{\alpha})$ is a $\tau^{\delta}$-Cauchy net of $X^{\delta}$, there is an increasing sequence $(\alpha_n)$ of indices such that $x_{\alpha_{n+1}}-x_{\alpha_n}\in V_{n+2}$ and $x_{\alpha_n}\in V_n$ for all $n$. This implies that for each $k$ and $n$, $|x_{\alpha_{n+1}}|\wedge u_k-|x_{\alpha_n}|\wedge u_k \in V_{n+2}$ and $|x_{\alpha_n}|\wedge u_k\in V_n$. Put $v^{*k}=\limsup_n |x_{\alpha_n}|\wedge u_k$ and $w^{*k}=\liminf_n |x_{\alpha_n}|\wedge u_k$ in $X^{\delta}$. By \cite[Lemma 4.14]{AB03}, $|x_{\alpha_n}|\wedge u_k- v^{*k}\in V_n$ for all $n$ and all $k$. Therefore, $v^{*k}\in V_n$ for each $n$. We conclude that $v^{*k}\in N\cap N^d$, and, therefore, $v^{*k}=0$. Similarly, $w^{*k}=0$. This implies that $|x_{\alpha_n}|\wedge u_k\xrightarrow{o}0$ in $n$ in $X^{\delta}$. Since $\{u_k\}$ is a countable order basis of $C_{\tau^{\delta}}$, an order dense ideal of $X^{\delta}$, $\{u_k\}$ is a countable order basis of $X^{\delta}$. This implies that $x_{\alpha_n}\xrightarrow{uo}0$ in $X^{\delta}$, hence in $X$.
\end{proof}

We need the following lemma in order to establish our final result:

\begin{lemma}
Suppose $X$ is a vector lattice admitting a Hausdorff Lebesgue topology $\tau$. TFAE:
\begin{enumerate}
\item $C_{\tau}$ has a countable order basis;
\item $X$ has the countable sup property and a countable order basis;
\item $X^u$ has the countable sup property.
\end{enumerate}
In this case, $C_{\tau}=X$.
\end{lemma}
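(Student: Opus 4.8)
The plan is to deduce the lemma from results already in hand. The equivalence of (ii) and (iii) is exactly \Cref{COBCSP} (its conditions (i) and (ii) are our (ii) and (iii)). For (ii)$\Rightarrow$(i) I would argue as in the proof of \Cref{cool thm}: if $X$ has the countable sup property and a countable order basis, then \cite[Theorem 4.17(b)]{AB03} forces $C_\tau=X$, which of course then has a countable order basis. Once the three conditions are known to be equivalent, each of them implies (ii) and hence $C_\tau=X$, which is the final clause.

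The substance is the implication (i)$\Rightarrow$(ii). By \Cref{COB Trick} fix a positive increasing sequence $(u_k)$ in $(C_\tau)_+$ that is a countable order basis for $C_\tau$. Since $\tau$ is Hausdorff and Lebesgue, $C_\tau$ is an order dense ideal of $X$ by \cite[Theorem 4.17]{AB03}, and this lets me promote $(u_k)$ to a countable order basis for $X$: were some nonzero $x\in X_+$ disjoint from every $u_k$, order density of $C_\tau$ would supply $c\in(C_\tau)_+$ with $0<c\le x$, whence $c\wedge u_k=0$ for all $k$, contradicting via \cite[Lemma 2.2]{LC} that $(u_k)$ is a countable order basis for $C_\tau$; so $B(\{u_k\})=X$. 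Now I would run the construction in the proof of \cite[Theorem 4.17(a)]{AB03}, exactly as in the proof of \Cref{cool thm}: since $(u_k)\subseteq C_\tau$, it produces a normal sequence $\{U_n\}$ of solid $\tau$-neighbourhoods of zero with $\{u_k\}\subseteq N^d$, where $N=\bigcap_n U_n$. As $N^d$ is a band and $B(\{u_k\})=X$, we get $N^d=X$, so that $N\subseteq N^{dd}=X^d=\{0\}$. Hence $\{U_n\}$ is a countable base at zero for a locally solid topology $\tau'$ on $X$ that is metrizable, Hausdorff (because $N=\{0\}$), and coarser than $\tau$; being coarser than the Lebesgue topology $\tau$, it is itself Lebesgue. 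Then \cite[Theorem 5.33]{AB03} applied to $\tau'$ shows $X$ has the countable sup property, which together with the countable order basis $(u_k)$ is (ii).

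I expect (i)$\Rightarrow$(ii) to be the crux, and within it the delicate point is that a countable order basis of the \emph{carrier} $C_\tau$ has, on the face of it, nothing to do with all of $X$; the way out is order density of $C_\tau$, which turns it into a countable order basis of $X$, and this is precisely what keeps the constructed topology $\tau'$ Hausdorff so that \cite[Theorem 5.33]{AB03} can be invoked. (Should \cite[Theorem 4.17]{AB03} be stated only for Fatou topologies, the versions needed for Hausdorff Lebesgue topologies are those already used in the proofs of \Cref{cool thm} and \Cref{measure to a.e.}.)
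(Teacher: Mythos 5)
Your proposal is correct, and its skeleton matches the paper's: (ii)$\Leftrightarrow$(iii) is delegated to \Cref{COBCSP}, (ii)$\Rightarrow$(i) comes from \cite[Theorem 4.17(b)]{AB03} forcing $C_\tau=X$, and the work is in (i)$\Rightarrow$(ii), where both arguments start from the normal sequence $\{U_n\}$ of \cite[Theorem 4.17(a)]{AB03} with $\{u_k\}\subseteq N^d$. The two proofs then finish differently. The paper keeps everything inside the carrier: it observes $N^d$ is a band of the ideal $C_\tau$ containing the order basis $\{u_k\}$, so $N^d=C_\tau$; since $N^d$ is a band of $X$ and $C_\tau$ is order dense, $C_\tau=X$, and the countable sup property is then read off from the carrier of a Lebesgue topology. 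You instead first upgrade $\{u_k\}$ to a countable order basis of all of $X$ via order density of $C_\tau$ (a correct and cleanly justified step, using \cite[Lemma 2.2]{LC}), which gives $N^d=X$, hence $N=\{0\}$; the normal sequence then generates a Hausdorff metrizable locally solid topology $\tau'\subseteq\tau$, which inherits the Lebesgue property, and \cite[Theorem 5.33]{AB03} yields the countable sup property. Your route is the one the paper itself uses in \Cref{cool thm}, and it buys a slightly more self-contained argument (it needs only Theorem 5.33 rather than the countable sup property of the carrier); the paper's route is shorter and, as a by-product, establishes $C_\tau=X$ en passant rather than as a consequence of (ii). Both are sound.
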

\begin{proof}
Only (i)$\Leftrightarrow$(ii) requires proof.

Suppose that $X$ has the countable sup property and a countable order basis. By \cite[Theorem 4.17]{AB03}, $C_{\tau}=X$ and, therefore, $C_{\tau}$ has a countable order basis. For the converse, assume that $C_{\tau}$ has a countable order basis; denote it by $\{u_k\}\subseteq (C_{\tau})_+$. As in the proof of \cite[Theorem 4.17]{AB03}, there is a normal sequence $\{U_n\}$ of solid $\tau$-neighbourhoods of zero such that $\{u_k\}\subseteq N^d\subseteq C_{\tau}$ where $N=\bigcap_{n=1}^{\infty} U_n$, and the disjoint complement is taken in $X$. Since $C_{\tau}$ is an ideal of $X$, $N^d$ is a band of $C_{\tau}$. Since $\{u_k\}\subseteq N^d\subseteq C_{\tau}$ and $\{u_k\}$ is a countable order basis for $C_{\tau}$, $N^d=C_{\tau}$. This implies, by \cite[Theorem 4.17]{AB03}, that $C_{\tau}$ is an order dense band of $X$ and, therefore, $C_{\tau}=X$. Since $\tau$ is Lebesgue, $C_{\tau}=X$ has the countable sup property.
\end{proof}

The final corollary generalizes another classical relation between a.e.~convergence and convergence in measure. As noted, the countable sup property assumption acts as a replacement for $\sigma$-finiteness in general vector lattices.

\begin{corollary}\label{further}
Let $X$ be a vector lattice admitting a minimal topology $\tau$. Assume that $X^u$ has the countable sup property. Then a sequence $(x_n)$ in $X$ is $\tau$-convergent to zero in $X$ if and only if every subsequence of $(x_n)$ has a further subsequence that $uo$-converges to zero in $X$.
\end{corollary}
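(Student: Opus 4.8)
The plan is to establish this as an instance of the standard subsequence principle, using as the two poles of the argument \Cref{a.e. implies measure} — which tells us that, for the minimal topology $\tau$, every $uo$-null net (in particular every $uo$-null sequence) is $\tau$-null — and \Cref{measure to a.e.}, which lets us extract $uo$-null subsequences from $\tau$-null sequences, provided $C_\tau$ has a countable order basis and $\tau$ has the Fatou property.

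First I would check that the ambient hypotheses of \Cref{measure to a.e.} are in force. Since $\tau$ is minimal it is Hausdorff and Lebesgue by \Cref{a.e. implies measure}, and Lebesgue topologies are Fatou. Since $X^u$ has the countable sup property, the preceding lemma (applied to the Hausdorff Lebesgue topology $\tau$) yields that $C_\tau$ has a countable order basis. Thus $(X,\tau)$ meets the hypotheses of \Cref{measure to a.e.}. For the forward implication, assume $x_n\goestau 0$ and let $(x_{n_k})$ be an arbitrary subsequence; it is again $\tau$-null, so, regarding it as a net indexed by $\mathbb N$, \Cref{measure to a.e.} produces an increasing sequence of indices $(k_j)$ with $x_{n_{k_j}}\xrightarrow{uo}0$ in $X$, which is precisely the further subsequence required.

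For the converse I would argue by contradiction: if $(x_n)$ is not $\tau$-null, there is a $\tau$-neighbourhood $U$ of zero and a subsequence $(x_{n_k})$ with $x_{n_k}\notin U$ for all $k$. By hypothesis this subsequence has a further subsequence $(x_{n_{k_j}})$ that $uo$-converges to zero in $X$; but then \Cref{a.e. implies measure}\eqref{a.e. implies measure-1}, valid because $\tau$ is minimal, forces $x_{n_{k_j}}\goestau 0$, contradicting $x_{n_{k_j}}\notin U$ for all $j$. I do not anticipate a substantive obstacle here: the content of the corollary lies entirely in the two theorems it combines, and the only genuine verification is that the countable sup property of $X^u$ delivers a countable order basis for $C_\tau$, which is exactly what the preceding lemma provides.
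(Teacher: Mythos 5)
Your proposal is correct and follows essentially the same route as the paper: the forward direction is \Cref{measure to a.e.} (whose hypotheses you rightly verify via the preceding lemma and the fact that minimal topologies are Lebesgue, hence Fatou), and the converse is the standard subsequence principle combined with \Cref{a.e. implies measure}\eqref{a.e. implies measure-1}, which is exactly the content of the cited \cite[Proposition 2.22]{me} that the paper invokes.
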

\begin{proof}
The result follows immediately by combining \Cref{measure to a.e.} with \cite[Proposition 2.22]{me}.
\end{proof}
\begin{example}
The assumption that $X^u$ has the countable sup property is crucial when trying to extract $uo$-convergent subsequences from topologically convergent sequences. Indeed, consider \cite[Example 9.6]{KLT}. This gives an example of an order continuous Banach lattice $X$ without a weak unit such that the minimal topology on the universal completion (which is the $un$-topology on $X^u$ induced by $X$; it is locally solid by \cite[Theorem 9.11]{me}) has a null sequence with no $uo$-null subsequences. Note that order continuous Banach lattices have the countable sup property and admit a countable order basis iff they admit a weak unit.
\end{example}
\subsection*{Acknowledgements} The second author would like to thank Vladimir Troitsky and Niushan Gao for valuable discussions.

\end{document}